\theoremstyle{plain}
\newtheorem{theorem}{Theorem}[section]
\newtheorem{corollary}[theorem]{Corollary}
\newtheorem{remark}[theorem]{Remark}
\theoremstyle{definition}
\newenvironment{proof of theorem 1.1}{{\noindent \em Proof of Theorem 1.1.}}{\hfill $\Box$\par}
\newenvironment{proof of theorem 1.2}{{\noindent \em Proof of Theorem 1.2.}}{\hfill $\Box$\par}
\DeclareSymbolFont{EulerExtension}{U}{euex}{m}{n}
\DeclareMathSymbol{\euintop}{\mathop} {EulerExtension}{"52}
\DeclareMathSymbol{\euointop}{\mathop} {EulerExtension}{"48}
\numberwithin{equation}{section}
\newcommand{\CC}{\ensuremath{\mathbb{C}}}
\newcommand{\ZZ}{\ensuremath{\mathbb{Z}}}
\newcommand{\DD}{\ensuremath{\mathbb{D}}}
\newcommand{\md}{\mathrm{d}}
\newcommand{\mo}{\mathcal{O}}
\newcommand{\mi}{\mathrm{i}}
\newcommand{\mf}{\mathfrak{f}}
\newcommand{\bU}{\textbf{U}}
\begin{document}
	
\title{Asymptotics of Saran's hypergeometric function $F_K$}
\author{Peng-Cheng Hang$^{\rm 1}$  and Min-Jie Luo$^{\rm 2}$\thanks{Corresponding author}}
\date{}
\maketitle
\begin{center}\small
	$^{1}$\emph{Department of Mathematics, School of Mathematics and Statistics,\\ Donghua University, Shanghai 201620, \\ 
			People's Republic of China.}\\
	E-mail: \texttt{mathroc618@outlook.com}
	\vspace{0.3cm}
	
	$^{2}$\emph{Department of Mathematics, School of Mathematics and Statistics,\\ Donghua University, Shanghai 201620, \\ 
		People's Republic of China.}\\
	E-mail: \texttt{mathwinnie@live.com}, \texttt{mathwinnie@dhu.edu.cn}
\end{center}
	
\vspace{0.5cm}
	
\begin{abstract}
	In this paper, we first establish asymptotic expansions of the Humbert function $\Psi_1$ for one large variable. The resulting expansions are then used to derive an asymptotic expansion of Saran's hypergeometric function $F_K$ when two of its variables become simultaneously large.
	\\
		
	\noindent\textbf{Keywords}: Asymptotic expansion, Humbert function, Saran's function. \\
		
	\noindent\textbf{Mathematics Subject Classification (2010)}:
	Primary
	33C65, % Appell, Horn and Lauricella functions
	33C70, % Other hypergeometric functions and integrals in several variables
	Secondary 
	33C05, % Classical hypergeometric functions, 2F1
\end{abstract}

\section{Introduction}\label{Section 1}

Saran's hypergeometric function $F_K$ is defined by \cite[p. 294, Eq. (2.4)]{Saran 1955}
\begin{equation}\label{F_K def}
	F_K[\alpha_1,\alpha_2,\alpha_2,\beta_1,\beta_2,\beta_1;\gamma_1,\gamma_2,\gamma_3;x,y,z]=\sum_{m,n,p=0}^{\infty}\frac{(\alpha_1)_{m}(\alpha_2)_{n+p}(\beta_1)_{m+p}(\beta_2)_n}{(\gamma_1)_m(\gamma_2)_n(\gamma_3)_p}\frac{x^m}{m!}\frac{y^n}{n!}\frac{z^p}{p!},
\end{equation}
where $(x,y,z)\in\DD_K:=\left\{(x,y,z)\in\CC^3:|x|<1,|y|<1,|z|<(1-|x|)(1-|y|)\right\}$. It has been shown in \cite[p. 1]{Luo-Xu-Raina-2022} that $\mathbb{D}_K$ is a complete Reinhardt domain. Using the series manipulation technique, we can obtain the following equivalent forms of \eqref{F_K def}:
\begin{align}
	&F_K[\alpha_1,\alpha_2,\alpha_2,\beta_1,\beta_2,\beta_1;\gamma_1,\gamma_2,\gamma_3;x,y,z]\notag\\
	&\hspace{1cm}=\sum_{p=0}^{\infty}\frac{(\alpha_2)_p(\beta_1)_p}{p!(\gamma_3)_p}
	{}_{2}F_{1}\left[\begin{matrix}
		\beta_1+p,\alpha_1\\
		\gamma_1
	\end{matrix}; x\right]
	{}_{2}F_{1}\left[\begin{matrix}
		\alpha_2+p,\beta_2\\
		\gamma_2
	\end{matrix}; y\right]z^p\notag\\
	&\hspace{1cm}=\sum_{m=0}^{\infty}\frac{(\alpha_1)_m(\beta_1)_m}{m!(\gamma_1)_m}F_2[\alpha_2,\beta_2,\beta_1+m;\gamma_2,\gamma_3;y,z]x^m,\label{FK-AppellF2}
\end{align}
where ${}_{2}F_{1}$ denotes the familiar Gauss hypergeometric function and $F_2$ denotes the second Appell hypergeometric function (see \cite[p. 23, Eq. (3)]{Srivastava-Karlsson}). Over the past few years, more and more attention has been given to various properties and applications of this function (see \cite{Antonova-Dmytryshyn-Goran-2023},  \cite{Dmytryshyn-Goran-2024}, \cite{Luo-Raina 2021} and \cite{Luo-Xu-Raina-2022} and the references therein). 

As a special function in several complex variables, the asymptotic behaviour of $F_K$ is rather complicated. Recently, Luo and Raina \cite{Luo-Raina 2021} obtain the complete asymptotic formulas for $F_K$ when one of its variables is large. So it is natural to study the asymptotic behaviour of $F_K$ when two variables are large. In fact, as a special case, Garcia and L\'{o}pez \cite{Garcia-Lopez 2010} have deduced the behaviour of $F_2$ when $x,y\to \infty$ with $|x|/|y|$ bounded. Their approach is based on the asymptotic method introduced in \cite{Lopez 2008} and a suitable Laplace integral representation for $F_2$. In view of the expression \eqref{FK-AppellF2}, where $y$ and $z$ appear together as variables of $F_2$, we shall study the asymptotic behaviour of $F_K$ when $y,z\rightarrow\infty$ with $|y|/|z|$ bounded. Note also that since
\[
	F_K[\alpha_1,\alpha_2,\alpha_2,\beta_1,\beta_2,\beta_1;\gamma_1,\gamma_2,\gamma_3;x,y,z]=F_K[\beta_2,\beta_1,\beta_1,\alpha_2,\alpha_1,\alpha_2;\gamma_2,\gamma_1,\gamma_3;y,x,z],
\]
the behaviour for large $x,z$ is equivalent to that for large $y,z$. 

To accomplish this, we need to first study the asymptotics of the Humbert function $\Psi_1$ defined by \cite[p. 26, Eq. (21)]{Srivastava-Karlsson}
\begin{equation}\label{Psi_1 def}
	\Psi_1[a,b;c,c';x,y]:=\sum_{m,n=0}^{\infty}\frac{\left(a\right)_{m+n}\left(b\right)_m}{\left(c\right)_m\left(c'\right)_n}\frac{x^m}{m!}\frac{y^n}{n!}
	~(|x|<1,|y|<\infty).
\end{equation}
The motivation of considering $\Psi_1$ is that $F_K$ possesses the following Laplace integral representation \cite[p. 134, Eq. (3)]{Saran 1957}:
\begin{align}\label{F_K Laplace integral}
		&F_K[\alpha_1,\alpha_2,\alpha_2,\beta_1,\beta_2,\beta_1;\gamma_1,\gamma_2,\gamma_3;x,y,z]\notag\\
		&\hspace{1cm}=\frac{1}{\Gamma(\alpha_2)}\int_0^{\infty}\mathrm{e}^{-s}s^{\alpha_2-1}
		{}_1F_1\left[\begin{matrix}
			\beta_2\\
			\gamma_2
		\end{matrix};ys\right]
		\Psi_1[\beta_1,\alpha_1;\gamma_1,\gamma_3;x,zs]\md s,
\end{align}
where $\Psi_1$ is given by \eqref{Psi_1 def} and ${}_{1}F_{1}$ is Kummer's confluent hypergeometric function. The convergence condition of \eqref{F_K Laplace integral}
\[
	\Re(\alpha_2)>0,~\Re((1-x)(1-y))>\Re(z),
\]
given in Saran's paper, is incorrect for it is derived from an incomplete estimate about $\Psi_1$ (see \cite[p. 143, Eq. (3)]{Saran 1957}). To clarify the convergence condition, we must derive the correct estimate of $\Psi_1$.

The paper is organized as follows:

In Section \ref{Section 2}, we provide some necessary information about the Gauss hypergeometric function ${}_{2}F_{1}$, the Humbert function $\Psi_1$ and the Kamp\'e de F\'eriet function $F_{0:1;2}^{1:1;2}$. 

Section \ref{Section 3} devotes to the asymptotics of $\Psi_1$. We first establish a contour integral representation for $\Psi_1$, from which an  asymptotic expansion of $\Psi_1$ for large $y$ is obtained in the left half-plane. Then, by making use of a result of Olver \cite{Olver 1970}, we obtain under certain restrictions the complete asymptotic expansions of $\Psi_1$ for large $y$ in the right half-plane. Finally, we obtain the asymptotic expansion of $\Psi_1$ for large $x$. 

Section \ref{Section 4} offers a comprehensive proof of the main result. First we convert the integral representation \eqref{F_K Laplace integral} into the form of the Mellin convolution. The desired approximation of $F_K$ is then obtained by applying L\'{o}pez's theorem \cite{Lopez 2008} to that Mellin convolution integral.

In Appendix \ref{Appendix A}, we compute an important Mellin transform of a product of ${}_{1}F_{1}$ and $\Psi_1$. Finding different proofs of it may have independent interest.

\section{Preliminaries}\label{Section 2}
\subsection{Asymptotics of the Gauss hypergeometric function $_2F_1$}
Wagner \cite[p. 2, Satz 1]{Wagner 1982} proved that
\begin{equation}\label{Wager formula}
	\begin{split}
		\frac{1}{\Gamma(c)}{}_{2}F_{1}\left[\begin{matrix}
			a,b\\
			c
		\end{matrix};z\right]&= \frac{\left(-az\right)^{-b}}{\Gamma(c-b)}[1+o(1)] \\
		&\hspace{0.5cm}+\frac{\mathrm{e}^{\pm \pi i(b-c)}}{\Gamma(b)}\left(1-z\right)^{c-a-b}\left(-az\right)^{b-c}[1+o(1)],\ \text{as }|a|\to\infty
	\end{split}
\end{equation}
with $b$ and $c$ ($c\notin \mathbb{Z}_{\leq0}$) fixed complex numbers, $z\ne 0$ and $|\arg(1-z)|<\pi$. The choice of the sign $\pm$ in the exponent and the determination of the range of $\arg(-az)$ depend on the conditions satisfied by the parameters. For convenience, we present some corollaries of Wagner's formula \eqref{Wager formula} which we shall use in the sequel. Similar analysis of \cite[Eq. (12)]{Luo-Raina 2021} gives that for $n\in\ZZ_{>0}$,
\begin{equation}\label{2F1 integer parameter}
	\left|{}_2F_1\left[\begin{matrix}
		a+n,b \\
		c
	\end{matrix};z\right]\right|=\mo\left(n^{-\omega}\rho_z^{n}\right),\ \ \text{as }n\to \infty,
\end{equation}
where
\begin{equation}\label{omega-rho def}
	\omega:=\min\{\Re(b),\Re(c-b)\}\ \text{ and }\ \rho_z:=\max\big\{1,\left|1-z\right|^{-1}\big\}.
\end{equation}
We use Landau's notation $f=\mathcal{O}(g)$ to mean that $|f|\leq C|g|$ for a suitable positive number $C$, which may be absolute or depend on various parameters, in which case the dependence may be indicated in a subscript.

We need the following estimate \cite[p. 4]{Luo-Raina 2021}:
\begin{equation}\label{2F1 imaginary parameter}
	\left|{}_2F_1\left[\begin{matrix}
		a+\mi t,b \\
		c
	\end{matrix};z\right]\right|=\mo\left(\left|t\right|^{-\omega}\mathrm{e}^{|t|\max\left\{0,\mp\arg(1-z)\right\}}\right),\ \ t\to \pm \infty,
\end{equation}
where $\omega$ is given in \eqref{omega-rho def}. We also need the behaviour of
\[\left|{}_2F_1\left[\begin{matrix}
	a+s,b \\
	c
\end{matrix};z\right]\right|,\ s\in\mathfrak{C},\]
where $\mathfrak{C}$ is the semi-circle $s=(N+1/2)\mathrm{e}^{\mi\theta}$ $(|\theta|\leq\pi/2)$ and $N\to\infty$ is taken through integral values. 

Note that for $s\in\mathfrak{C}$,
\[
\left|\left(1-z\right)^{-s}\right|=\mathrm{e}^{(N+1/2)(\sin\theta\arg\left(1-z\right) -\cos\theta\log|1-z|)}.
\]
This together with \eqref{Wager formula} implies that for $s\in\mathfrak{C}$,
\begin{equation}\label{2F1 semi-circle}
	\left|{}_2F_1\left[\begin{matrix}
		a+s,b \\
		c
	\end{matrix};z\right]\right|
	=\mo\left(N^{-\omega}\mathrm{e}^{(N+1/2)(|\sin\theta||\arg(1-z)| -\cos\theta\log(1-|z|)}\right),\ N\to\infty,
\end{equation}
where $|z|<1$ and $\omega$ is given in \eqref{omega-rho def}.

\subsection{Properties of the Humbert function $\Psi_1$}

We have (see \cite[p. 75]{Humbert-1992} and \cite[p. 118, Eq. (83)]{Brychkov-Saad 2014})
\begin{equation}\label{Psi_1 series representation}
	\Psi_1[a,b;c,c';x,y]=\sum_{n=0}^{\infty}\frac{\left(a\right)_n}{\left(c'\right)_n}\,_2F_1\left[\begin{matrix}
		a+n,b \\
		c
	\end{matrix};x\right]\frac{y^n}{n!},\ \ |x|<1,~|y|<\infty.
\end{equation}
Taking \eqref{2F1 integer parameter} into account, the summand in \eqref{Psi_1 series representation} has the order of magnitude
\[
\mo\left(n^{\Re(a-c')-\omega}\frac{(\rho_x|y|)^n}{n!}\right),~
n\to\infty,
\]
which implies that the series converges absolutely in the region
\[
\mathbb{D}_{\Psi_1}:=\left\{(x,y)\in\mathbb{C}^2:x\ne 1,~|\arg(1-x)|<\pi,~|y|<\infty\right\}.
\]
So the series in \eqref{Psi_1 series representation} provides an analytic continuation of $\Psi_1$ to $\mathbb{D}_{\Psi_1}$.

We also have \cite[p. 115, Eq. (38)]{Brychkov-Saad 2014}
\begin{equation}\label{Psi_1 integral representation}
	\Psi_1[a,b;c,c';x,y]=\frac{\Gamma(c)}{\Gamma(b)\Gamma(c-b)}\int_0^1 u^{b-1}\left(1-u\right)^{c-b-1}\left(1-ux\right)^{-a}
	{}_1F_1\left[\begin{matrix}
		a\\
		c'
	\end{matrix};\frac{y}{1-ux}\right]\md u,
\end{equation}
which is valid for $\Re(c)>\Re(b)>0$ and $(x,y)\in\DD_{\Psi_1}$. The integral representation \eqref{Psi_1 integral representation} follows from the series representation \eqref{Psi_1 series representation} and Euler integral representation for ${}_2F_1$ (see \cite[p. 388, Eq. (15.6.1)]{NIST Handbook}).

We shall also use the Kummer transformation of $\Psi_1$ \cite[Eq. (2.54)]{Choi-Hasanov 2011}
\begin{equation}\label{Psi_1 Kummer transformation}
	\Psi_1[a,b;c,c';x,y]=\left(1-x\right)^{-a}\Psi_1\left[a,c-b;c,c';\frac{x}{x-1},\frac{y}{1-x}\right].
\end{equation}

There are some useful identities about $\Psi_1$ in the literature (see \cite{Brychkov-Saad 2014}, \cite{Choi-Hasanov 2011}, \cite{Humbert-1992} and \cite{Hai-Yakubovich-Book}). But we still know very little about its asymptotics. Joshi and Arya \cite{Joshi-Arya-1982} established a set of bilateral inequalities for $\Psi_1$ under certain rather restrictive conditions. By using a Taubarian theorem for Laplace transform, Wald and Henkel \cite{Wald-Henkel-2018} found the leading asymptotic behaviour of the Humbert functions $\Phi_2$, $\Phi_3$ and $\Xi_2$ when the absolute values of the two independent variables become simultaneously large. They also considered $\Psi_1$ and pointed out that their technique can not handle $\Psi_1$ (see \cite[p. 99]{Wald-Henkel-2018}). Therefore, our work in Section \ref{Section 3} may help advance the research in this direction.

\subsection{Kamp\'e de F\'eriet function $F_{0:1;2}^{1:1;2}$}

We need the following special case of the Kamp\'e de F\'eriet function \cite[p. 27]{Srivastava-Karlsson}: 
\begin{equation}\label{F_{0:1;2}^{1:1;2} def}
	F_{0:1;2}^{1:1;2}\left[\begin{matrix}
		a_1:b_1;d_1,d_2\\
		-:  c_1;e_1,e_2
	\end{matrix};x,y\right]=\sum_{m,n=0}^{\infty}\frac{\left(a_1\right)_{m+n}\left(b_1\right)_m\left(d_1\right)_n\left(d_2\right)_n}{\left(c_1\right)_m\left(e_1\right)_n\left(e_2\right)_n}\frac{x^m}{m!}\frac{y^n}{n!}, ~|x|+|y|<1.
\end{equation}
In view of $(a_1)_{n+m}=(a_1)_n(a_1+n)_m$, we have from \eqref{F_{0:1;2}^{1:1;2} def} that
\begin{equation}\label{F_{0:1;2}^{1:1;2} series}
	F_{0:1;2}^{1:1;2}\left[\begin{matrix}
		a_1:b_1;d_1,d_2\\
		-:  c_1;e_1,e_2
	\end{matrix};x,y\right] =\sum_{n=0}^{\infty}\frac{\left(a_1\right)_n\left(d_1\right)_n\left(d_2\right)_n}{\left(e_1\right)_n\left(e_2\right)_n}{}_2F_1\left[\begin{matrix}
		a_1+n,b_1\\
		c_1
	\end{matrix};x\right]\frac{y^n}{n!}.
\end{equation}
Denote by $F_n$ the summand of the series \eqref{F_{0:1;2}^{1:1;2} series}. Using the estimate \eqref{2F1 integer parameter} gives
\[
\left|F_n\right|=\mo\left(n^{\Re(a_1+d_1+d_2-e_1-e_2)-\omega'-1}(\rho_x|y|)^n\right),
\]
where $\omega':=\min\{\Re(b_1), \Re(c_1-b_1)\}$. Then \eqref{F_{0:1;2}^{1:1;2} series} converges absolutely in the region
\[\DD_F:=\left\{(x,y)\in\CC^2:x\ne 1,\,|\arg(1-x)|<\pi,\,|y|<\min\{1,|1-x|\}\right\},\]
and thus gives an analytic continuation of $F_{0:1;2}^{1:1;2}$ to $\DD_F$.

Replacing ${}_2F_1$ with its Euler integral representation in \eqref{F_{0:1;2}^{1:1;2} series} and then interchanging the order of summation and integration we obtain (initially with $\Re(c_1)>\Re(b_1)>0$ and
$|x|+|y|<1$)
\begin{align}\label{F_{0:1;2}^{1:1;2} integral}
		F_{0:1;2}^{1:1;2}\left[\begin{matrix}
			a_1:b_1;d_1,d_2\\
			-:  c_1;e_1,e_2
		\end{matrix};x,y\right]&=\frac{\Gamma(c_1)}{\Gamma(b_1)\Gamma(c_1-b_1)}\int_0^1 u^{b_1-1}\left(1-u\right)^{c_1-b_1-1}\notag\\
		& \hspace{1cm}\times\left(1-ux\right)^{-a_1}{}_3F_2\left[\begin{matrix}
			d_1,d_2,a_1\\
			e_1,e_2
		\end{matrix};\frac{y}{1-ux}\right]\md u,
\end{align}
where  
	\[
	\Re(c_1)>\Re(b_1)>0,~
	|\arg(1-x)|<\pi~\text{and}~
	\left|\arg\left(1-\frac{y}{1-ux}\right)\right|<\pi ~(\forall u\in[0,1]).
	\]
A simpler condition under which \eqref{F_{0:1;2}^{1:1;2} integral} holds is given by
\begin{equation}\label{F_{0:1;2}^{1:1;2} integral-condition}
\Re(c_1)>\Re(b_1)>0,~
x<1,~|\arg(1-y)|<\pi~\text{and}~
\left|\arg\left(1-\frac{y}{1-x}\right)\right|<\pi.
\end{equation}
The integral \eqref{F_{0:1;2}^{1:1;2} integral} provides another analytic continuation of $F_{0:1;2}^{1:1;2}$.

\subsection{Other functions}
The asymptotic expansions of ${}_1F_1$ are given by (\cite[p. 193]{Andrews-Askey-Roy})
\begin{align}
	{}_1F_1\left[\begin{matrix}
		a\\
		c
	\end{matrix};z\right]&\sim \frac{\Gamma(c)}{\Gamma(c-a)}\sum_{n=0}^{\infty}\frac{\left(a\right)_n\left(1+a-c\right)_n}{n!}\left(-z\right)^{-a-n},\ \ \text{as }\,z\to \infty \,\text{ with }\,\Re(z)<0;\label{1F1 left-plane}\\
	{}_1F_1\left[\begin{matrix}
		a\\
		c
	\end{matrix};z\right]&\sim \frac{\Gamma(c)}{\Gamma(a)}\,\mathrm{e}^z\cdot\sum_{n=0}^{\infty}\frac{\left(1-a\right)_n\left(c-a\right)_n}{n!}\,z^{a-c-n},\ \ \text{as }\,z\to \infty \,\text{ with }\,\Re(z)>0.\label{1F1 right-plane}
\end{align}

As $z\rightarrow\infty$ in the sector $|\arg(z)|\leq \pi-\delta(<\pi)$, 
\begin{equation}\label{Stirling I}
	\Gamma(az+b)\sim \sqrt{2\pi}\mathrm{e}^{-az}(az)^{az+b-(1/2)},
\end{equation}
where $a(>0)$ and $b(\in\mathbb{C})$ are both fixed (see \cite[p. 141, Eq. (5.11.7)]{NIST Handbook}). When $y\rightarrow \pm\infty$, 
\begin{equation}\label{Stirling II}
	|\Gamma(x+\mi y)|\sim\sqrt{2\pi}|y|^{x-(1/2)}\mathrm{e}^{-\pi|y|/2},
\end{equation}
uniformly for bounded real values of $x$ (see \cite[p. 141, Eq. (5.11.9)]{NIST Handbook}).

The Laurent expansion of $\Gamma(z)$ at its pole $z=-n\,(n\in\mathbb{Z}_{\geq 0})$ is given by (\cite[p. 29]{Higher Transcendental Function Vol 1})
\[
\Gamma(z-n)=\frac{\left(-1\right)^n}{n!}\left[z^{-1}+\psi(n+1)\right]+\mo(z),\ \ \text{as}\ \,z\to 0,
\]
where $\psi(x)$ is the digamma function. Hence for $m,n\in\mathbb{Z}_{\geq 0}$, we have
\begin{equation}\label{Gamma limit}
	\lim_{z\to 0}\big[a\,\Gamma(z-m)+b\,\Gamma(-z-n)\big]=\frac{\left(-1\right)^m}{m!}\psi(m+1)\,a+\frac{\left(-1\right)^n}{n!}\psi(n+1)\,b,
\end{equation}
with $a$ and $b$ chosen such that this limit exsits.

Luo and Raina \cite{Luo-Raina 2021}  extended Saran's function $F_K$ to the region
\[
	\mathbb{V}_K=\left\{(x,y,z)\in\CC^3:\begin{array}{c}
		x\ne 1,\ y\ne 1,\ |\arg(1-x)|<\pi,\ |\arg(1-y)|<\pi,\\
		z\ne 0,\ |\arg(-z)|<\pi,\\
		\max\left\{0,\mp\arg(1-x)\right\}+\max\left\{0,\mp\arg(1-y)\right\}<\pi\pm\arg(-z)
	\end{array}\right\}.
\]

\section{Asymptotics of $\Psi_1$}\label{Section 3}
\subsection{Asymptotics for large $y$}
Following \cite{Luo-Raina 2021}, we shall use the Mellin-Barnes contour integral representation of $\Psi_1$ to obtain the asymptotic expansion of $\Psi_1$ for large $y$. The starting point is \eqref{Psi_1 series representation}. Define
\[
\lambda(n):=\frac{\Gamma(a+n)\Gamma(c')}{\Gamma(a)\Gamma(c'+n)}\,_2F_1\left[\begin{matrix}
	a+n,b\\
	c
\end{matrix};x\right]
\]
so that
\[
\Psi_1[a,b;c,c';x,-y]=\sum_{n=0}^{\infty}\frac{\lambda(n)}{n!}(-y)^n.
\]

Using Ramanujan's master theorem \cite[p. 107]{Amdeberhan-Espinosa-Gonzalez}, we formally have the following Mellin transform of the function $\Psi_1$ with respect to $y$:
\begin{equation}\label{Psi_1 Mellin transform}
	\int_0^{\infty}y^{s-1}\Psi_1[a,b;c,c';x,-y]\md y=\Gamma(s)\lambda(-s).
\end{equation}
Applying the inverse Mellin transform to \eqref{Psi_1 Mellin transform}, we arrive at
\[\Psi_1[a,b;c,c';x,-y]=\frac{1}{2\pi\mi}\frac{\Gamma(c')}{\Gamma(a)}\int_{\sigma-\mi\infty}^{\sigma+\mi\infty}{}_2F_1\left[\begin{matrix}
	a-s,b\\
	c
\end{matrix};x\right]\frac{\Gamma(a-s)}{\Gamma(c'-s)}\Gamma(s)y^{-s}\md s,\]
where $\sigma:=\Re(s)$ satisfies: (i) if $\Re(a)>0$ then $0<\sigma<\Re(a)$; (ii) if $\Re(a)\leq0$ then $\sigma=0$.

\begin{theorem}
	Let
	\[\mathbb{V}_{\Psi_1}=\left\{(x,y)\in\CC^2:\begin{array}{c}
		x\ne 1,\ y\ne 0,\ |\arg(1-x)|<\pi,\ |\arg(-y)|<\pi,\\
		\max\left\{0,\mp\arg(1-x)\right\}<\frac{\pi}{2}\pm\arg(-y)
	\end{array}\right\}.\]
	Then
	\begin{equation}\label{Psi_1 Mellin-Barnes integral}
		\Psi_1[a,b;c,c';x,y]=\frac{1}{2\pi\mi}\frac{\Gamma(c')}{\Gamma(a)}\int_{\mathfrak{L}_{\mi\sigma\infty}}{}_2F_1\left[\begin{matrix}
			a+s,b\\
			c
		\end{matrix};x\right]\frac{\Gamma(a+s)}{\Gamma(c'+s)}\Gamma(-s)(-y)^s\md s,
	\end{equation}
	where the path of integration $\mathfrak{L}_{\mi\sigma\infty}$, starting at $\sigma-\mi\infty$ and ending at $\sigma+\mi\infty$, is a straight line parallel to the imaginary axis intended if necessary to separate the poles of $\Gamma(a+s)$ from the poles of $\Gamma(-s)$.
\end{theorem}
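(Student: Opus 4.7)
The plan is to verify \eqref{Psi_1 Mellin-Barnes integral} by residue calculus. Denote the right-hand side by $I(x,y)$. The strategy is to close the contour $\mathfrak{L}_{\mi\sigma\infty}$ to the right with a large semicircle, check that the contribution of the semicircle vanishes as its radius tends to infinity, and identify the sum of residues picked up at $s=0,1,2,\ldots$ with the series representation \eqref{Psi_1 series representation} of $\Psi_1$.

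First I would establish absolute convergence of the integral along the vertical line $\Re(s)=\sigma$. Combining the estimate \eqref{2F1 imaginary parameter} with Stirling's formula \eqref{Stirling II} applied to $\Gamma(-s)$, $\Gamma(a+s)$ and $\Gamma(c'+s)$, and using $|(-y)^{\sigma+\mi t}|=|y|^{\sigma}\mathrm{e}^{-t\arg(-y)}$, the integrand is, up to a polynomial factor in $|t|$, of order
\[
\exp\!\left(-\tfrac{\pi}{2}|t|+|t|\max\{0,\mp\arg(1-x)\}\mp t\arg(-y)\right),\qquad t=\Im(s)\to\pm\infty,
\]
with the upper sign taken for $t>0$ and the lower for $t<0$. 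The defining conditions of $\mathbb{V}_{\Psi_1}$ are precisely what makes this exponent strictly negative in both directions, giving absolute convergence.

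For the main computation I would close the contour to the right with the semicircle $\mathfrak{C}_N=\{(N+\tfrac{1}{2})\mathrm{e}^{\mi\theta}:|\theta|\leq\pi/2\}$, with $N\in\ZZ$ and $N\to\infty$, which encloses the simple poles of $\Gamma(-s)$ at $s=0,1,\ldots,N$. Using $\mathrm{Res}_{s=n}\Gamma(-s)=(-1)^{n+1}/n!$ and $\Gamma(c')\Gamma(a+n)/[\Gamma(a)\Gamma(c'+n)]=(a)_n/(c')_n$, the sign flip from the clockwise orientation together with the identity $(-1)^{n+1}(-y)^n=-y^n$ shows that the sum of residue contributions reproduces exactly the general term of \eqref{Psi_1 series representation}. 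An analytic continuation argument in $(x,y)$ then extends the resulting identity to all of $\mathbb{V}_{\Psi_1}$.

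The main obstacle is showing that $\int_{\mathfrak{C}_N}\to 0$ as $N\to\infty$. Using the reflection formula $\Gamma(-s)\Gamma(s)\sin(\pi s)=-\pi/s$, applying Stirling \eqref{Stirling I} to $\Gamma(s)$ and $\Gamma(c'+s)$, and noting that $|\sin(\pi s)|$ has exponential growth of rate $\pi$ in $|\Im s|$ on the semicircle of radius $N+1/2$, the product $\Gamma(-s)\Gamma(a+s)/\Gamma(c'+s)$ acquires a super-exponential decay factor of order $N^{-N\cos\theta}$ on the interior of $\mathfrak{C}_N$. Combined with the bound \eqref{2F1 semi-circle} for ${}_2F_1$ and the elementary identity $|(-y)^s|=\mathrm{e}^{(N+1/2)[\cos\theta\log|y|-\sin\theta\arg(-y)]}$, this handles the bulk of the contour. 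The delicate regime is a small angular neighbourhood of $\theta=\pm\pi/2$, where the super-exponential factor degenerates and the analysis collapses to the same exponential balance as in Step 1; the conditions $\max\{0,\mp\arg(1-x)\}<\pi/2\pm\arg(-y)$ are exactly what guarantees a strictly negative rate there.
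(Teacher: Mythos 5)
Your outline coincides with the paper's proof: absolute convergence on the vertical line from \eqref{2F1 imaginary parameter} and \eqref{Stirling II} under the angular condition defining $\mathbb{V}_{\Psi_1}$, closing the contour to the right over the semicircle $s=(N+\tfrac12)\mathrm{e}^{\mi\theta}$, the reflection-formula bound on $\Gamma(-s)$ producing the factor $N^{-(N+1/2)\cos\theta}$, the residues of $\Gamma(-s)$ at $s=n$ reproducing \eqref{Psi_1 series representation}, and a final analytic continuation. The one place where you overreach is the last sentence: near $\theta=\pm\pi/2$ the only available bound for the hypergeometric factor on the semicircle is \eqref{2F1 semi-circle}, whose exponent carries $|\sin\theta|\,|\arg(1-x)|$ with an absolute value, not the signed quantity $\max\{0,\mp\arg(1-x)\}$; the resulting rate at $|\theta|=\pi/2$ is $|\arg(1-x)|+|\arg(-y)|-\pi/2$ (for the worse sign of $\theta$), so what you actually need there is $|\arg(1-x)|+|\arg(-y)|<\pi/2$, which is strictly stronger than the hypotheses of the theorem. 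The paper resolves exactly this by performing the contour closing only on the subregion $\widetilde{\mathbb{V}}_{\Psi_1}$ (which imposes $|\arg(1-x)|+|\arg(-y)|<\pi/2$ together with a smallness condition $|y|<\mathrm{e}^{-1}(1-|x|)$ that makes the $\cos\theta$-proportional part of the exponent negative as well) and then extending to all of $\mathbb{V}_{\Psi_1}$ by the analytic continuation step you already invoke. So your argument is repairable as written --- just do not claim the semicircle estimate holds under the theorem's hypotheses alone; restrict $(x,y)$ first and let the continuation do the rest.
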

\begin{proof}[\textbf{\emph{Proof.}}]
	The proof will be divided into two parts. Firstly, we prove the integral exists for $(x,y)\in\mathbb{V}_{\Psi_1}$. Secondly, we show by using Cauchy's residue theorem that the integral is equal to $\Psi_1$ when $x$ and $y$ are appropriately restricted.
	
	Denote by $\Psi(s)$ the integrand in \eqref{Psi_1 Mellin-Barnes integral}, namely,
	\begin{equation}\label{Psi def}
		\Psi(s):={}_2F_1\left[\begin{matrix}
			a+s,b\\
			c
		\end{matrix};x\right]\frac{\Gamma(a+s)}{\Gamma(c'+s)}\Gamma(-s)(-y)^s.
	\end{equation}
	It follows from \eqref{2F1 imaginary parameter} and \eqref{Stirling II} that as $t\to \pm\infty$,
	\begin{equation}\label{Gamma and 2F1 estimate}
		\begin{split}
			\left|\frac{\Gamma(a+\sigma+\mi t)}{\Gamma(c'+\sigma+\mi t)}\Gamma(-\sigma-\mi t)(-y)^{\sigma+\mi t}\right|& =\mo\left(\left|t\right|^{\Re(a-c')-\sigma-\frac{1}{2}}\mathrm{e}^{-\left(\frac{\pi}{2}\pm\arg(-y)\right)|t|}\right),\\
			\left|{}_2F_1\left[\begin{matrix}
				a+\sigma+\mi t,b\\
				c
			\end{matrix};x\right]\right|& =\mo\left(\left|t\right|^{-\omega}\mathrm{e}^{|t|\max\left\{0,\mp\arg(1-x)\right\}}\right),
		\end{split}
	\end{equation}
	where $\omega=\min\{\Re(b),\Re(c-b)\}$. Now we can conclude that when
	\begin{equation}\label{Psi_1 condition-inequality}
		\max\left\{0,\mp\arg(1-x)\right\}<\frac{\pi}{2}\pm\arg(-y),
	\end{equation}
	the integrand $|\Psi(s)|\ (s=\sigma+\mi t)$ decays exponentially as $|s|\to\infty$ along the path $\mathfrak{L}_{\mi\sigma\infty}$.
	
	The inequality \eqref{Psi_1 condition-inequality} implies that $|\arg(1-x)+\arg(-y)|<\pi/2$. Thus, if we define
	\[\widetilde{\mathbb{V}}_{\Psi_1}:=\left\{(x,y)\in\CC^2:|x|<1,\,y\ne 0,\,|\arg(1-x)|+|\arg(-y)|<\frac{\pi}{2},~|y|<\frac{1}{\mathrm{e}}(1-|x|)\right\},\]
	then we have $\widetilde{\mathbb{V}}_{\Psi_1}\subset\mathbb{V}_{\Psi_1}$.
	
	Next we show that when $(x,y)\in\widetilde{\mathbb{V}}_{\Psi_1}$, the integral in \eqref{Psi_1 Mellin-Barnes integral} yields the series representation \eqref{Psi_1 series representation} of $\Psi_1$. We close the contour by the semi-circle $\mathfrak{C}$ in the right-hand side of the complex plane. The semi-circle $\mathfrak{C}$ is parametrized by 
	$s=(N+1/2)\mathrm{e}^{\mi\theta}$ $(|\theta|\leq \pi/2)$ and $N\to\infty$ through integral values.
	
	From \eqref{Stirling I}, we have immediately
	\[
	\left|\frac{\Gamma(a+s)}{\Gamma(c'+s)}\right| =\mo\left(N^{\Re(a-c')}\right),
	\]
	as $N\rightarrow\infty$. Since $|\sin\pi s|\geq |\sinh\Im(\pi s)|=|\sinh(\pi(N+1/2)\sin\theta)|$, we have
	\[
	|\Gamma(-s)|=\left|\frac{\pi}{\sin\pi s}\frac{1}{\Gamma(1+s)}\right|
	=\mathcal{O}\left(N^{-\frac{1}{2}-\left(N+\frac{1}{2}\right)\cos\theta}\mathrm{e}^{\left(N+\frac{1}{2}\right)(\cos\theta+\theta\sin\theta-\pi|\sin\theta|)}\right),
	\]
	as $N\rightarrow\infty$.
	Thus
	\[
	\left|\frac{\Gamma(a+s)}{\Gamma(c'+s)}\Gamma(-s)(-y)^s\right|
	=\mathcal{O}\left(N^{\Re(a-c')-\frac{1}{2}-\left(N+\frac{1}{2}\right)\cos\theta}\mathrm{e}^{\left(N+\frac{1}{2}\right)(\cos\theta+\theta\sin\theta+\cos\theta\ln|y|-\sin\theta\arg(-y)-\pi|\sin\theta|)}\right),
	\]
	as $N\rightarrow\infty$. Combining this with \eqref{2F1 semi-circle}, we obtain 
	\begin{align*}
	\Psi(s)
	&=\mathcal{O}\left(N^{\Re(a-c')-\frac{1}{2}-\left(N+\frac{1}{2}\right)\cos\theta}\mathrm{e}^{\left(N+\frac{1}{2}\right)(\cos\theta+|\theta||\sin\theta|+\cos\theta\ln|y|+|\sin\theta||\arg(-y)|-\pi|\sin\theta|)}\right)\notag\\
	&\hspace{1cm}\cdot\mo\left(N^{-\omega}\mathrm{e}^{(N+1/2)(|\sin\theta||\arg(1-x)| -\cos\theta\ln(1-|x|)}\right)\notag\\
	&=\mo\left(N^{\Re(a-c')-\omega-\frac{1}{2}-\left(N+\frac{1}{2}\right)\cos\theta}\mathrm{e}^{\left(N+\frac{1}{2}\right)\left(\delta_1|\sin\theta|+\delta_2\cos\theta\right)}\right),
	\end{align*}
	where $\delta_1:=|\theta|+|\arg(-y)|+|\arg(1-x)|-\pi<0$ and $\delta_2:=1+\ln|y|-\ln(1-|x|)<0$. 
	It follows that the integral on the semi-circle $\mathfrak{C}$ tends to zero as $N\to\infty$.
	
	Finally, by Cauchy's residue theorem and the assertion that (\cite[p. 7]{Andrews-Askey-Roy})
	\[\underset{s=-n}{\mathrm{Res}}\,\Gamma(-s)=\frac{\left(-1\right)^n}{n!},\]
	we get the representation \eqref{Psi_1 series representation}. This completes the proof.
\end{proof}

\begin{corollary}
	When $(x,y)\in \mathbb{V}_{\Psi_1}$ and $|y|\to\infty$, then
	\begin{equation}\label{Psi_1 for y in left-plane}
		\Psi_1[a,b;c,c';x,y]\sim \frac{\Gamma(c')}{\Gamma(c'-a)}\sum_{n=0}^{\infty}{}_2F_1\left[\begin{matrix}
			-n,b\\
			c
		\end{matrix};x\right]\frac{(a)_n(1+a-c')_n}{n!}(-y)^{-n-a}.
	\end{equation}
\end{corollary}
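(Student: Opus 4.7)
The plan is to derive the expansion by shifting the Mellin--Barnes contour in \eqref{Psi_1 Mellin-Barnes integral} leftward and collecting residues. Inside the integrand $\Psi(s)$ defined in \eqref{Psi def}, the function ${}_2F_1[a+s,b;c;x]$ is entire in $s$, $1/\Gamma(c'+s)$ is entire, $\Gamma(-s)$ has poles only at $s=0,1,2,\ldots$, and $(-y)^s$ is analytic (since $|\arg(-y)|<\pi$ on $\mathbb{V}_{\Psi_1}$). Hence in the left half-plane the only singularities are the simple poles of $\Gamma(a+s)$ at $s=-a-n$ for $n\in\mathbb{Z}_{\geq 0}$. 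I would therefore fix $N\in\mathbb{Z}_{\geq 0}$, choose $\sigma_N:=-\Re(a)-N-\frac{1}{2}$, and apply Cauchy's residue theorem to the rectangular contour with vertical sides at $\Re(s)=\sigma$ and $\Re(s)=\sigma_N$.

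Next I would compute the residue at $s=-a-n$. Using $\operatorname*{Res}_{s=-a-n}\Gamma(a+s)=(-1)^n/n!$, the identity $1/\Gamma(c'-a-n)=(-1)^n(1+a-c')_n/\Gamma(c'-a)$, and $\Gamma(a+n)=\Gamma(a)(a)_n$, I find
\[
\operatorname*{Res}_{s=-a-n}\Psi(s)=\frac{(a)_n(1+a-c')_n\,\Gamma(a)}{n!\,\Gamma(c'-a)}\,{}_2F_1\!\left[\begin{matrix}-n,b\\ c\end{matrix};x\right](-y)^{-a-n}.
\]
Multiplying by the prefactor $\Gamma(c')/\Gamma(a)$ produces exactly the $n$-th term in \eqref{Psi_1 for y in left-plane}. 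Together with the contour shift this gives
\[
\Psi_1[a,b;c,c';x,y]=\frac{\Gamma(c')}{\Gamma(c'-a)}\sum_{n=0}^{N}\frac{(a)_n(1+a-c')_n}{n!}\,{}_2F_1\!\left[\begin{matrix}-n,b\\ c\end{matrix};x\right](-y)^{-a-n}+R_N(x,y),
\]
with $R_N(x,y)=\frac{\Gamma(c')}{2\pi\mi\,\Gamma(a)}\int_{\mathfrak{L}_{\mi\sigma_N\infty}}\Psi(s)\,\md s$.

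The final step is to control $R_N$. Parametrising $s=\sigma_N+\mi t$ and applying the bounds \eqref{Gamma and 2F1 estimate} established in the proof of Theorem~3.1, the integrand on the shifted contour is dominated by
\[
|t|^{\Re(a-c')-\sigma_N-\frac{1}{2}-\omega}\exp\!\left[-\Big(\tfrac{\pi}{2}\pm\arg(-y)-\max\{0,\mp\arg(1-x)\}\Big)|t|\right]|y|^{\sigma_N},
\]
whose $t$-integral is finite thanks to \eqref{Psi_1 condition-inequality}. The horizontal segments of the rectangle vanish as $T\to\infty$ by the same exponential factor. Consequently $R_N(x,y)=\mathcal{O}(|y|^{-\Re(a)-N-1/2})$ as $|y|\to\infty$ in the prescribed region, which is of smaller order than the last term retained in the sum and hence establishes the asymptotic expansion.

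The main technical obstacle is the justification of the contour shift: one must verify that the estimates on $\Psi(s)$ give decay on the shifted vertical line (uniform in $N$ except through the factor $|y|^{\sigma_N}$) and vanishing of the horizontal sides. Both follow from the very bounds used to prove existence of the representation in Theorem~3.1, so the argument reduces essentially to a careful bookkeeping of the constants and of the dependence on $N$.
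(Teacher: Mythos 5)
Your proposal is correct and follows essentially the same route as the paper: shift the Mellin--Barnes contour of \eqref{Psi_1 Mellin-Barnes integral} to the left past the poles of $\Gamma(a+s)$ at $s=-a-n$, compute the residues (your simplification via $\Gamma(a+n)=\Gamma(a)(a)_n$ and $1/\Gamma(c'-a-n)=(-1)^n(1+a-c')_n/\Gamma(c'-a)$ is exactly the paper's), and bound the remainder integral on the line $\Re(s)=-\Re(a)-N-1/2$ using the estimates \eqref{Gamma and 2F1 estimate} to get $R_N=\mathcal{O}(|y|^{-\Re(a)-N-1/2})$. The only cosmetic difference is that you spell out the rectangular contour and the vanishing of its horizontal sides, which the paper subsumes under the remark that the shift is permissible by the exponential decay of the integrand.
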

\begin{proof}[\textbf{\emph{Proof.}}]
	Let $I$ denote the right-hand side of \eqref{Psi_1 Mellin-Barnes integral}, and let $\Psi(s)$ denote the integrand as defined in \eqref{Psi def}. Choose the positive integer $M\geq \max\{1,\Re(-a)\}$ and shift the integration contour to the left (which is permissible on account of the
	exponential decay of the integrand).
	
	Note that $s=-a-n\ (n\in\ZZ_{\geq 0})$ are the only poles of $\Psi(s)$ because the principal branch of ${}_2F_1[a,b;c;z]$ is an entire function in $a$ (see \cite[p. 384]{NIST Handbook}). Therefore,
	\[
	\underset{s=-a-n}{\mathrm{Res}}\Psi(s)
	={}_2F_1\left[\begin{matrix}
		-n,b\\
		c
	\end{matrix};x\right]\frac{\Gamma(a+n)}{\Gamma(c'-a-n)}\frac{(-1)^n}{n!}(-y)^{-a-n}.
	\]
	By Cauchy's residue theorem, we obtain
	\begin{equation}\label{Psi_1 partial expansion}
		\frac{\Gamma(a)}{\Gamma(c')}I=\sum_{n=0}^{M}{}_2F_1\left[\begin{matrix}
			-n,b\\
			c
		\end{matrix};x\right]\frac{\Gamma(a+n)}{\Gamma(c'-a-n)}\frac{(-1)^n}{n!}(-y)^{-n-a}+R_M(y),
	\end{equation}
	where
	\begin{equation}\label{R_M def}
		R_M(y):=\frac{1}{2\pi\mi}\int_{C_M}\!\Psi(s)\md s
	\end{equation}
	and $C_M$ denotes the vertical line $\Re(s)=\Re(-a)-M-1/2$. Using \eqref{Gamma and 2F1 estimate} can get
	\[
	|\Psi(s)|=\mo\left(|y|^{\Re(-a)-M-\frac{1}{2}}|t|^{\Re(2a-c')+M-\omega}\mathrm{e}^{-\chi|t|}\right), ~
	t=\Im(s),
	\]
	where $\omega$ is given in \eqref{omega-rho def} and 
	\[
	\chi:=\frac{\pi}{2}\pm\arg(-y)-\max\{0,\mp\arg(1-x)\}>0.
	\] 
	Thus the integral \eqref{R_M def} converges when $(x,y)\in \mathbb{V}_{\Psi_1}$. It follows that
	\[R_M(y)=\mo_M\left(|y|^{\Re(-a)-M-\frac{1}{2}}\right),\]
	which shows that \eqref{Psi_1 partial expansion} is an asymptotic expansion of Poincar\'e type. Simplifying \eqref{Psi_1 partial expansion} with the help of the identities $\Gamma(a+n)=\Gamma(a)\left(a\right)_n$ and
	\[
	\Gamma(c'-a-n)=\left(-1\right)^n\frac{\Gamma(c'-a)}{\left(1+a-c'\right)_n} 
	\]
	gives the desired expansion \eqref{Psi_1 for y in left-plane}.
\end{proof}

\begin{remark}
	The region $\mathbb{V}_{\Psi_1}$ has an equivalent form:
	\[
	\mathbb{V}_{\Psi_1}=\left\{(x,y)\in\CC^2:x\ne 1,\,y\ne 0,\,|\arg(-y)|<\frac{\pi}{2},\,|\arg(1-x)+\arg(-y)|<\frac{\pi}{2}\right\},
	\]
	which suggests that \eqref{Psi_1 for y in left-plane} is an asymptotic expansion for $y$ on the left half-plane. If we further let $x=0$ in \eqref{Psi_1 for y in left-plane}, we get the expansion of ${}_1F_1[a;c';y]$ for $\Re(y)<0$, viz. the formula \eqref{1F1 left-plane}.
\end{remark}

In the rest of this subsection, we establish the asymptotics of $\Psi_1$ for $y$ on the right half-plane by applying Olver's result \cite[Theorem 1]{Olver 1970}, which adapts Laplace's method to the complex parameter.

We restrict ourselves to the following three cases:
\begin{itemize}
	\item[(i)] $\Re(c)>\Re(b)>0$, $x=0$, $y\in\mathbb{S}$;
	\item[(ii)] $\Re(c)>\Re(b)>0$, $x<0$, $y\in\mathbb{S}$;
	\item[(iii)] $\Re(c)>\Re(b)>0$, $0<x<1$, $y\in\mathbb{S}$,
\end{itemize}
where $\mathbb{S}:=\{y\in\mathbb{C}:|\arg y|\leq\frac{\pi}{2}-\delta\}$ with $\delta\in(0,\frac{\pi}{2})$ fixed.

\subsubsection{Case\ (i)}
In this case, the asymptotics follows from 
\[
\Psi_1[a,b;c,c';0,y]={}_1F_1\left[\begin{matrix}
	a\\
	c'
\end{matrix};y\right]
\]
and \eqref{1F1 right-plane}.

\subsubsection{Case\ (ii)}

In this case, if $u\in (0,1)$ then $1-ux\in (1,1-x)$. Denote
\[\Psi_1\equiv\Psi_1[a,b;c,c';x,y],\ g(u):=u^{b-1}\left(1-u\right)^{c-b-1}\left(1-ux\right)^{-a}.\]
The expansion \eqref{1F1 right-plane} implies that there exists a constant $C>0$ such that
\begin{equation}\label{1F1 inequality}
	\left|{}_1F_1\left[\begin{matrix}
		a\\
		c'
	\end{matrix};\frac{y}{1-ux}\right]-h(u)\right|\leq CR(u),\ \ y\in\mathbb{S},
\end{equation}
where
\[h(u)=\frac{\Gamma(c')}{\Gamma(a)}\mathrm{e}^{\frac{y}{1-ux}}\left(\frac{y}{1-ux}\right)^{a-c'},\ R(u)=\mathrm{e}^{\frac{\Re(y)}{1-ux}}\left(\frac{|y|}{1-ux}\right)^{\Re(a-c')-1}.\]

For $\Re(\alpha)>0$ and $\Re(\beta)>0$, we define
\[
I_y(\alpha,\beta,\gamma):=\int_0^1 q(u)\mathrm{e}^{-yp(u)}\md u,
\]
where $q(u):=u^{\alpha-1}\left(1-u\right)^{\beta-1}\left(1-ux\right)^\gamma$ and $p(u):=1/(ux-1)$. Obviously, $p(u)$ and $q(u)$ satisfy the conditions \cite[p. 229]{Olver 1970}. Then applying Olver's result to $I_y(\alpha,\beta,\gamma)$, we obtain
\begin{equation}\label{I_y expansion}
	I_y(\alpha,\beta,\gamma)\sim \Gamma(\alpha)y^{-\alpha}\mathrm{e}^y\cdot\sum_{n=0}^{\infty}\frac{\left(\alpha\right)_n\lambda_n}{y^n},\ \ y\to\infty,\,y\in\mathbb{S},
\end{equation}
where $\lambda_0=\left(-x\right)^{-\alpha}$ and
\begin{equation}\label{lambda_n def}
	\lambda_n:=\underset{u=0}{\mathrm{Res}}\,\frac{q(u)}{\left(p(u)+1\right)^{\alpha+n}},\ \ n=0,1,2,\cdots.
\end{equation}

Take the leading term of the expansion \eqref{I_y expansion} and then
\[I_y(\alpha,\beta,\gamma)=\Gamma(\alpha)\left(-x\right)^{-\alpha}y^{-\alpha}\mathrm{e}^y\left[1+\mo\left(y^{-1}\right)\right],\ \ y\to\infty,\,y\in\mathbb{S}.\]
Using \eqref{Psi_1 integral representation} and \eqref{1F1 inequality}, we get that as $y\to \infty$ in $\mathbb{S}$,
\begin{align*}
	\Psi_1& =\frac{\Gamma(c)}{\Gamma(b)\Gamma(c-b)}\int_0^1 g(u)h(u)\md u+\mo\left(\int_0^1|g(u)|R(u)\md u\right)\\
	& =\frac{\Gamma(c)\Gamma(c')}{\Gamma(a)\Gamma(b)\Gamma(c-b)}y^{a-c'}I_y(b,c-b,c'-2a)+\mo\left(\left|y\right|^{\Re(a-c')-1}I_{\Re(y)}\left(\Re(b),\Re(c-b),\Re(c'-2a)+1\right)\right)\\
	& =\frac{\Gamma(c)\Gamma(c')}{\Gamma(a)\Gamma(c-b)}\left(-x\right)^{-b}y^{a-b-c'}\mathrm{e}^y+\mo_x\left(\left|y\right|^{\Re(a-b-c')-1}\mathrm{e}^{\Re(y)}\right).
\end{align*}
Hence, we have the following theorem.

\begin{theorem}
	When $x<0$ and $y\to\infty$ with $y\in\mathbb{S}$, then
	\[
		\Psi_1[a,b;c,c';x,y]\sim \frac{\Gamma(c)\Gamma(c')}{\Gamma(a)\Gamma(c-b)}\left(-x\right)^{-b}y^{a-b-c'}\mathrm{e}^y.
	\]
\end{theorem}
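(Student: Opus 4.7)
The plan is to reduce the asymptotic analysis to a Laplace-type integral by inserting the Euler-type representation \eqref{Psi_1 integral representation} and substituting the known expansion \eqref{1F1 right-plane} of the inner ${}_1F_1$. Since $x<0$ and $u\in[0,1]$, the factor $1-ux$ ranges in the real interval $[1,1-x]$, which keeps $y/(1-ux)$ in the right half-plane whenever $y\in\mathbb{S}$; this is the structural feature that makes \eqref{1F1 right-plane} applicable pointwise in $u$ and permits an estimate of the form \eqref{1F1 inequality}, with error controlled by $R(u)=\mathrm{e}^{\Re(y)/(1-ux)}(|y|/(1-ux))^{\Re(a-c')-1}$.

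After plugging in the leading term $h(u)=\tfrac{\Gamma(c')}{\Gamma(a)}\mathrm{e}^{y/(1-ux)}(y/(1-ux))^{a-c'}$, the task reduces to the asymptotics of the single integral
\[
I_y(\alpha,\beta,\gamma)=\int_0^1 u^{\alpha-1}(1-u)^{\beta-1}(1-ux)^{\gamma}\mathrm{e}^{-y p(u)}\md u,\qquad p(u)=\tfrac{1}{ux-1},
\]
with $\alpha=b$, $\beta=c-b$, $\gamma=c'-2a$. The phase $p(u)$ attains its unique minimum at $u=0$ (where $p(0)=-1$) and is analytic on $[0,1]$, while the amplitude $q(u)=u^{\alpha-1}(1-u)^{\beta-1}(1-ux)^{\gamma}$ has the algebraic singularity at $u=0$ required by \cite[Theorem 1]{Olver 1970}. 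Applying that theorem yields the expansion \eqref{I_y expansion}, whose leading coefficient $\lambda_0=(-x)^{-\alpha}$ comes from expanding $q(u)/(p(u)+1)^{\alpha}$ at $u=0$ using $p(u)+1=ux/(ux-1)$.

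Extracting the leading term gives $I_y(b,c-b,c'-2a)=\Gamma(b)(-x)^{-b}y^{-b}\mathrm{e}^{y}[1+\mathcal{O}(y^{-1})]$. Combining with the identical Olver-type estimate applied to the error integral $\int_0^1|g(u)|R(u)\md u$ (which produces an $I_{\Re(y)}$ of the same shape with $\gamma$ shifted by $+1$, hence is smaller by a factor $|y|^{-1}$), the integral representation \eqref{Psi_1 integral representation} delivers
\[
\Psi_1=\frac{\Gamma(c)\Gamma(c')}{\Gamma(a)\Gamma(c-b)}(-x)^{-b}y^{a-b-c'}\mathrm{e}^y+\mathcal{O}_x\!\left(|y|^{\Re(a-b-c')-1}\mathrm{e}^{\Re(y)}\right),
\]
which is the claim.

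The main obstacle, and the point that deserves the most care, is verifying the hypotheses of Olver's theorem uniformly in $y\in\mathbb{S}$: one must check that the contour $[0,1]$ lies in the correct sector of steepest descent for every admissible $\arg y$, confirm the analyticity/algebraic-singularity conditions on $p,q$ at the endpoint $u=0$ (the interior structure is routine since the only singularity of $(1-ux)^{\gamma}$ sits at $u=1/x<0$), and control the error term $R(u)$ uniformly so that it does not accidentally dominate when $\cos(\arg y)$ is small near the boundary of $\mathbb{S}$. Once these technicalities are settled the identification of the leading coefficient is purely algebraic.
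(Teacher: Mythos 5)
Your proposal is correct and follows essentially the same route as the paper: substituting the Euler-type representation \eqref{Psi_1 integral representation}, replacing the inner ${}_1F_1$ by its right half-plane expansion \eqref{1F1 right-plane} with the controlled remainder $R(u)$, and applying Olver's theorem to the resulting integral $I_y(b,c-b,c'-2a)$ with $p(u)=1/(ux-1)$, including the same identification $\lambda_0=(-x)^{-b}$ and the same error bound $\mathcal{O}_x\bigl(|y|^{\Re(a-b-c')-1}\mathrm{e}^{\Re(y)}\bigr)$. No substantive differences to report.
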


\subsubsection{Case (iii)}
In this case, the pair
\[
(x',y')=\left(\frac{x}{x-1},\frac{y}{1-x}\right)
\] 
satisfies \textbf{Case (ii)}. We can apply the Kummer transformation \eqref{Psi_1 Kummer transformation} to the pair $(x',y')$. Hence we get the following corollary.

\begin{corollary}
	When $0<x<1$ and $y\to\infty$ with $y\in\mathbb{S}$, then
	\begin{equation}\label{Psi_1 Case(iii)}
		\Psi_1[a,b;c,c';x,y]\sim \frac{\Gamma(c)\Gamma(c')}{\Gamma(a)\Gamma(b)}\,x^{b-c}\left(1-x\right)^{c'+2(c-a-b)}y^{a+b-c-c'}\mathrm{e}^{\frac{y}{1-x}}.
	\end{equation}
\end{corollary}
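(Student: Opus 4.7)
The plan is to reduce Case (iii) to Case (ii) via the Kummer transformation \eqref{Psi_1 Kummer transformation}. Setting $x'=x/(x-1)$ and $y'=y/(1-x)$, we have
\[
	\Psi_1[a,b;c,c';x,y]=(1-x)^{-a}\,\Psi_1[a,c-b;c,c';x',y'].
\]
First I would verify that the pair $(x',y')$ satisfies the hypotheses of Case (ii): when $0<x<1$, the factor $1-x$ is real and positive, so $x'=x/(x-1)<0$, while $y'=y/(1-x)$ has the same argument as $y$ and $|y'|\to\infty$ precisely when $|y|\to\infty$ along $\mathbb{S}$.

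Next I would apply the theorem from Case (ii) to $\Psi_1[a,c-b;c,c';x',y']$, treating $c-b$ as the new second parameter (so that the role previously played by $b$ is now $c-b$, and $c-b$ becomes $b$):
\[
	\Psi_1[a,c-b;c,c';x',y']\sim \frac{\Gamma(c)\Gamma(c')}{\Gamma(a)\Gamma(b)}(-x')^{b-c}(y')^{a+b-c-c'}\mathrm{e}^{y'}.
\]
Then I would substitute back $(-x')^{b-c}=\left(x/(1-x)\right)^{b-c}=x^{b-c}(1-x)^{c-b}$ and $(y')^{a+b-c-c'}=y^{a+b-c-c'}(1-x)^{c+c'-a-b}$, and $\mathrm{e}^{y'}=\mathrm{e}^{y/(1-x)}$.

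Finally, I would multiply by the prefactor $(1-x)^{-a}$ and collect the powers of $(1-x)$: the total exponent is $-a+(c-b)+(c+c'-a-b)=c'+2(c-a-b)$, which reproduces the claimed expansion \eqref{Psi_1 Case(iii)}. The main technical obstacle is essentially bookkeeping: one must check that Case (ii)'s hypothesis $\Re(c)>\Re(b)>0$ (which in the transformed problem becomes $\Re(c)>\Re(c-b)>0$, equivalent to $\Re(c)>\Re(b)>0$ by symmetry) is preserved, and then carefully combine the three contributions to the exponent of $(1-x)$. No further analytic work is needed beyond the already-established Case (ii).
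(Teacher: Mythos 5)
Your proposal is correct and is exactly the paper's argument: the paper likewise observes that $(x',y')=(x/(x-1),\,y/(1-x))$ falls under Case (ii) and invokes the Kummer transformation \eqref{Psi_1 Kummer transformation}, leaving the substitution $b\mapsto c-b$ and the bookkeeping of the powers of $(1-x)$ implicit. Your computation of the exponent $-a+(c-b)+(c+c'-a-b)=c'+2(c-a-b)$ and your check that the hypothesis $\Re(c)>\Re(c-b)>0$ is equivalent to $\Re(c)>\Re(b)>0$ correctly fill in those details.
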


We can obtain the complete asymptotic expansion of $\Psi_1$ in \textbf{Case (ii)}. Indeed, in \eqref{I_y expansion}, take $h(u)$ as the first $N+1$ terms of \eqref{1F1 right-plane} and take $R(u)$ as the corresponding reminder term. Using \eqref{Psi_1 integral representation} and \eqref{I_y expansion}, we get that as $y\to \infty$ in $\mathbb{S}$,
\begin{align*}
	\Psi_1& =\frac{\Gamma(c)}{\Gamma(b)\Gamma(c-b)}\int_0^1 g(u)h(u)\md u+\mo\left(\int_0^1|g(u)|R(u)\md u\right)\\
	& =\frac{\Gamma(c)\Gamma(c')}{\Gamma(a)\Gamma(b)\Gamma(c-b)}\,y^{a-c'}\sum_{n=0}^N\frac{(1-a)_n(c'-a)_n}{n!y^n}I_y(b,c-b,n+c'-2a)+\mo_x\left(|y|^{\Re(a-b-c')-N-1}\mathrm{e}^{\Re(y)}\right)\\
	& =\frac{\Gamma(c)\Gamma(c')}{\Gamma(a)\Gamma(c-b)}\,y^{a-b-c'}\mathrm{e}^y\sum_{n=0}^N\frac{(1-a)_n(c'-a)_n}{n!\,y^n}\sum_{n=0}^N\frac{\left(b\right)_n\lambda_n}{y^n}+\mo_x\left(|y|^{\Re(a-b-c')-N-1}\mathrm{e}^{\Re(y)}\right)\\
	& =\frac{\Gamma(c)\Gamma(c')}{\Gamma(a)\Gamma(c-b)}\,y^{a-b-c'}\mathrm{e}^y\left\{\sum_{n=0}^N\psi_n y^{-n}+\mo_x\left(|y|^{-N-1}\right)\right\},
\end{align*}
where 
\begin{equation}\label{psi_n def}
	\psi_n:=\sum_{k=0}^n\frac{(1-a)_k(c'-a)_k}{k!}(b)_{n-k}\lambda_{n-k},\ n=0,1,2,\cdots,
\end{equation}
and $\lambda_n=\lambda_n(x)$ is given by \eqref{lambda_n def}. In particular, $\psi_0:=\left(-x\right)^{-b}$. Hence, we get the following theorem.

\begin{theorem}
	When $x<0$ and $y\to\infty$ with $y\in\mathbb{S}$, the complete expansion of $\Psi_1$ is given by
	\begin{equation}\label{Psi_1 Case(ii) complete expansion}
		\Psi_1[a,b;c,c';x,y]\sim \frac{\Gamma(c)\Gamma(c')}{\Gamma(a)\Gamma(c-b)}\,y^{a-b-c'}\mathrm{e}^y\sum_{n=0}^{\infty}\psi_n y^{-n},
	\end{equation}
	where $\psi_n=\psi_n(x)$ is given by \eqref{psi_n def}. In particular, $\psi_0=\left(-x\right)^{-b}$.
\end{theorem}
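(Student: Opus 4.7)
The plan is to extend the leading-order analysis already performed for Case (ii) into a complete Poincar\'e-type asymptotic expansion by retaining all terms in the expansion \eqref{1F1 right-plane} of the inner ${}_1F_1$, rather than merely its leading term. Starting from the integral representation \eqref{Psi_1 integral representation} of $\Psi_1$, valid under $\Re(c)>\Re(b)>0$, I would substitute the first $N+1$ terms of \eqref{1F1 right-plane} for ${}_1F_1[a;c';y/(1-ux)]$, together with the standard remainder of order $|y/(1-ux)|^{\Re(a-c')-N-1}\mathrm{e}^{\Re(y)/(1-ux)}$. Since $x<0$ and $u\in(0,1)$, the factor $1-ux$ lies in the bounded interval $(1,1-x)$, so the remainder estimate is uniform in $u$, and the error contribution to $\Psi_1$ is controlled by a constant (depending on $x$) times $|y|^{\Re(a-b-c')-N-1}\mathrm{e}^{\Re(y)}$ after integrating against $g(u)$.

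Next, I would apply Olver's theorem \cite{Olver 1970} to each of the resulting integrals, which after collecting prefactors take the form $I_y(b,c-b,n+c'-2a)$. Olver's result yields the full asymptotic expansion
\[
I_y(\alpha,\beta,\gamma)\sim \Gamma(\alpha)y^{-\alpha}\mathrm{e}^y\sum_{k=0}^{\infty}\frac{(\alpha)_k\lambda_k}{y^k},\qquad y\to\infty,\; y\in\mathbb{S},
\]
with $\lambda_k$ defined by the residue formula \eqref{lambda_n def} specialized to $\alpha=b$, $\beta=c-b$, $\gamma=n+c'-2a$. Assembling the contributions of the various $n$'s, I obtain a double series in $y^{-1}$, from which the Cauchy product furnishes, at each order $y^{-n}$, the coefficient
\[
\psi_n=\sum_{k=0}^{n}\frac{(1-a)_k(c'-a)_k}{k!}(b)_{n-k}\lambda_{n-k},
\]
which is exactly \eqref{psi_n def}. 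Verification of $\psi_0=(-x)^{-b}$ is immediate from $\lambda_0=(-x)^{-b}$.

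The main technical obstacle will be handling the remainders consistently: I must ensure that the remainder from truncating \eqref{1F1 right-plane} at $N+1$ terms and the remainder from truncating Olver's expansion at the same order can be combined so that the overall error in $\Psi_1$ is genuinely $\mathcal{O}_x(|y|^{\Re(a-b-c')-N-1}\mathrm{e}^{\Re(y)})$, uniformly in $y\in\mathbb{S}$. This requires that the integrable singularities of $g(u)$ at $u=0$ and $u=1$ (from the factors $u^{b-1}$ and $(1-u)^{c-b-1}$) do not spoil the pointwise Olver error estimate when integrated, which follows from $\Re(c)>\Re(b)>0$; once this uniformity is established, the Cauchy product rearrangement into $\sum_n\psi_n y^{-n}$ is a formal manipulation and the stated expansion follows.
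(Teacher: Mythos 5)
Your proposal follows essentially the same route as the paper: truncate \eqref{1F1 right-plane} inside the Euler-type integral \eqref{Psi_1 integral representation}, apply Olver's expansion \eqref{I_y expansion} to each resulting integral $I_y(b,c-b,n+c'-2a)$, and form the Cauchy product to obtain \eqref{psi_n def}. Your added attention to the uniformity in $u$ of the remainder (using $1-ux\in(1,1-x)$ and $\Re(c)>\Re(b)>0$) is a point the paper passes over more quickly, but the argument is the same.
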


\begin{remark}
	The coefficients in Laplace's method, such as $\lambda_n$ in \eqref{lambda_n def}, were explicitly determined by G. Nemes; see \cite[p. 477, Eq. (2.6)]{Nemes 2013}. To get the complete asymptotic expansion of $\Psi_1$ in \textbf{\emph{Case (iii)}}, we may insert \eqref{Psi_1 Case(ii) complete expansion} into \eqref{Psi_1 Kummer transformation}, but the resulting expansion is terribly complicated.
\end{remark}

\subsection{Asymptotics for large $x$}
In this subsection, we consider the behaviour of $\Psi_1$ for large $x$ and derive the complete asymptotic expansion without using the contour integrals and the residue theorem.

The connection formula for the Gauss hypergeometric function is given by (\cite[p. 390, Eq. (15.8.2)]{NIST Handbook})
\begin{align}\label{2F1 connection formula}
		{}_2F_1\left[\begin{matrix}
			a,b\\
			c
		\end{matrix};z\right]
		&=\mf_c(b,a)\left(-z\right)^{-a}{}_2F_1\left[\begin{matrix}
			1-c+a,a\\
			1-b+a
		\end{matrix};\frac{1}{z}\right]\notag\\
		&\hspace{1cm} +\mf_c(a,b)\left(-z\right)^{-b}{}_2F_1\left[\begin{matrix}
			1-c+b,b\\
			1-a+b
		\end{matrix};\frac{1}{z}\right],\ \ |\arg(-z)|<\pi.
\end{align}
where
\begin{equation}\label{f_gamma def}
	\mf_{\gamma}(a,b):=\frac{\Gamma(\gamma)\Gamma(a-b)}{\Gamma(a)\Gamma(\gamma-b)}.
\end{equation}

Applying \eqref{2F1 connection formula} to \eqref{Psi_1 series representation}, we get 
\begin{equation}\label{Psi_1 connection formula}
	\Psi_1[a,b;c,c';x,y]=\mf_c(b,a)\left(-x\right)^{-a}\bU_1(x,y)+\mf_c(a,b)\left(-x\right)^{-b}\bU_2(x,y),
\end{equation}
where 
\begin{align*}
	\bU_1(x,y)& :=\sum_{n=0}^{\infty}\frac{\left(a\right)_n}{n!\left(c'\right)_n}\frac{\left(1-c+a\right)_n}{\left(1-b+a\right)_n}\,_2F_1\left[\begin{matrix}
		1-c+a+n,a+n\\
		1-b+a+n
	\end{matrix};\frac{1}{x}\right]\left(-\frac{y}{x}\right)^n,\\
	\bU_2(x,y)& :=\sum_{n=0}^{\infty}\frac{\left(a-b\right)_n}{n!\left(c'\right)_n}\,_2F_1\left[\begin{matrix}
		1-c+b,b\\
		1-a-n+b
	\end{matrix};\frac{1}{x}\right]y^n
\end{align*}
and $\mf_\gamma(a,b)$ is given by \eqref{f_gamma def}.

The series $\bU_1$ is a special case of the Kamp\'e de F\'eriet function. In fact, we have
\begin{align*}
	\bU_1(x,y)
	&=\sum_{n=0}^{\infty}\sum_{m=0}^{\infty}\frac{\left(a\right)_{n+m}\left(1-c+a\right)_{n+m}}{\left(1-b+a\right)_{n+m}\left(c'\right)_n m!n!}\frac{(-y)^n}{x^{n+m}} \notag\\
	&=F_{1:0;1}^{2:0;0}
	\left[\begin{matrix}
		a, 1-c+1:-;-\\
		~~~~1-b+a:-;c'
	\end{matrix};\frac{1}{x},-\frac{y}{x}\right],
\end{align*}
which suggests that the double series converges absolutely for $|x|>1$ and $|y|<\infty$. Further calculation gives
\begin{align}
	\bU_1(x,y)
	&=\sum_{k=0}^{\infty}\frac{\left(a\right)_k\left(1-c+a\right)_k}{\left(1-b+a\right)_k k!}\frac{1}{x^k}\sum_{n=0}^k\frac{\left(-k\right)_n}{\left(c'\right)_n}\frac{y^n}{n!}\notag\\
	& =\sum_{k=0}^{\infty}{}_1F_1\left[\begin{matrix}
		-k\\
		c'
	\end{matrix};y\right]\frac{\left(a\right)_k\left(1-c+a\right)_k}{\left(1-b+a\right)_k k!}\frac{1}{x^k},~|x|>1,|y|<\infty.\label{U_1 def}
\end{align}

For the series $\bU_2$, we first note that
\begin{align*}
	\bU_2(x,y)& =\sum_{\ell,n=0}^{\infty}\frac{\left(1-c+b\right)_{\ell}\left(b\right)_{\ell}}{\left(1-a+b-n\right)_{\ell}\ell!}\frac{\left(a-b\right)_n}{\left(c'\right)_n}\frac{1}{x^{\ell}}\frac{y^n}{n!}\notag\\
	&=\sum_{n,\ell=0}^{\infty}(a-b)_{n-\ell}\frac{\left(1-c+b\right)_{\ell}\left(b\right)_{\ell}}{(c')_n}\frac{(-1/x)^{\ell}}{\ell!}\frac{y^n}{n!}\notag\\
	&=H_{11}\left[a-b,b,1-c+b,c';y,-\frac{1}{x}\right], ~|x|>1,|y|<\infty, 
\end{align*}
where $H_{11}$ defined by 
\[
H_{11}\left[a,b,c,d;x,y\right]:=\sum_{m,n=0}^{\infty}\frac{(a)_{m-n}(b)_n(c)_n}{(d)_m}\frac{x^m}{m!}\frac{y^n}{n!}, ~ |x|<\infty, |y|<1
\]
is one member of \emph{Horn's list} (see \cite[Eq. (39), p. 227]{Higher Transcendental Function Vol 1} and \cite{Brychkov-Savischenko-2023}). In addition, we have
\begin{align}
	\bU_2(x,y)
	&=\sum_{\ell,n=0}^{\infty}\frac{\left(1-c+b\right)_{\ell}\left(b\right)_{\ell}}{\left(1-a+b\right)_{\ell}\ell!}\frac{1}{x^{\ell}}\cdot\frac{\left(a-b-\ell\right)_n}{\left(c'\right)_n}\frac{y^n}{n!}\notag\\
	& =\sum_{\ell=0}^{\infty}{}_1F_1\left[\begin{matrix}
		a-b-\ell\\
		c'
	\end{matrix};y\right]\frac{\left(1-c+b\right)_{\ell}\left(b\right)_{\ell}}{\left(1-a+b\right)_{\ell}\ell!}\frac{1}{x^{\ell}}, ~|x|>1,|y|<\infty.\label{U_2 def}
\end{align}

The previous derivations are summarized in the following theorem.
\begin{theorem}
	When $|\arg(-x)|<\pi$ and $|x|\to \infty$, we have
	\[
	\Psi_1[a,b;c,c';x,y]\sim\mf_c(b,a)\left(-x\right)^{-a}+\mf_c(a,b)\left(-x\right)^{-b}{}_1F_1\left[\begin{matrix}
		a-b\\
		c'
	\end{matrix};y\right],
	\]
	where $\mf_{\gamma}$ is given by \eqref{f_gamma def}. The complete expansion is obtained by combining \eqref{Psi_1 connection formula}, \eqref{U_1 def} and \eqref{U_2 def}.
\end{theorem}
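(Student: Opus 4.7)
The plan is to justify rigorously the derivation sketched immediately above the theorem statement, namely: apply the connection formula \eqref{2F1 connection formula} term-by-term inside the series representation \eqref{Psi_1 series representation}, split $\Psi_1$ into two pieces as in \eqref{Psi_1 connection formula}, recast $\bU_1$ and $\bU_2$ as the single series \eqref{U_1 def} and \eqref{U_2 def}, and then read off the leading order from the $k=0$ and $\ell=0$ terms.

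First I would fix $|\arg(-x)|<\pi$ with $|x|>1$ and $y\in\CC$ bounded. Substituting \eqref{2F1 connection formula} into \eqref{Psi_1 series representation}, the $n$-th summand splits into two pieces whose sum over $n$ produces the factors $(-x)^{-a}\bU_1(x,y)$ and $(-x)^{-b}\bU_2(x,y)$ multiplied by the constants $\mf_c(b,a)$ and $\mf_c(a,b)$. To justify the interchange of summation that yields \eqref{U_1 def} and \eqref{U_2 def}, I would estimate each ${}_2F_1[\cdots;1/x]$ factor as $\mathcal{O}(1)$ uniformly on $\{|x|\geq R\}$ for any $R>1$, and use Stirling's formula \eqref{Stirling I} together with the standard Pochhammer estimates to bound the Pochhammer ratios appearing in the double-sum form of $\bU_1$ and $\bU_2$. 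This gives absolute convergence of the double series for $|x|>1$ and $|y|<\infty$, and Fubini then justifies the rearrangement producing \eqref{U_1 def} and \eqref{U_2 def}.

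With \eqref{U_1 def} and \eqref{U_2 def} in hand, the leading-order asymptotics are immediate. The $k=0$ term of \eqref{U_1 def} equals ${}_1F_1[0;c';y]=1$, and the $\ell=0$ term of \eqref{U_2 def} equals ${}_1F_1[a-b;c';y]$. Substituting these into \eqref{Psi_1 connection formula} and inserting the $1/x$ tails into the error yields
\[
\Psi_1[a,b;c,c';x,y]=\mf_c(b,a)(-x)^{-a}\bigl[1+\mathcal{O}(x^{-1})\bigr]+\mf_c(a,b)(-x)^{-b}\left[{}_1F_1\!\left[\begin{matrix}a-b\\c'\end{matrix};y\right]+\mathcal{O}(x^{-1})\right],
\]
as $|x|\to\infty$, which is the claimed leading-order relation.

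The main obstacle is ensuring Poincar\'e-type behaviour of the tails uniformly in $|x|$ along rays $|\arg(-x)|<\pi-\delta$. For $\bU_1$ this is easy because ${}_1F_1[-k;c';y]$ is a polynomial in $y$ of degree $k$ and the Pochhammer prefactor provides a genuine factorial decay. For $\bU_2$ the coefficient ${}_1F_1[a-b-\ell;c';y]$ can grow with $\ell$, so a careful application of Stirling's formula \eqref{Stirling I} to $(b)_\ell(1-c+b)_\ell/[(1-a+b)_\ell \ell!]$ is needed to ensure that, for each fixed $y$, the series \eqref{U_2 def} is dominated by a convergent geometric series in $1/|x|$ on $|x|\geq R$. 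Once this uniform majorisation is in place, termwise inspection of \eqref{U_1 def}--\eqref{U_2 def} upgrades the leading term into the complete Poincar\'e expansion advertised in the final sentence of the theorem.
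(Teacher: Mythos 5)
Your proposal is correct and follows essentially the same route as the paper: substitute the connection formula \eqref{2F1 connection formula} into \eqref{Psi_1 series representation}, rearrange the resulting double series into \eqref{U_1 def} and \eqref{U_2 def}, and read off the leading terms from $k=0$ and $\ell=0$. The only cosmetic difference is that you justify the rearrangement and the Poincar\'e-type tail bounds by direct Stirling estimates, whereas the paper does so by identifying $\bU_1$ and $\bU_2$ with the Kamp\'e de F\'eriet function $F_{1:0;1}^{2:0;0}$ and Horn's $H_{11}$ and invoking their known convergence regions.
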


\section{Asymptotics of $F_K$}\label{Section 4}

Using \eqref{1F1 left-plane} and \eqref{Psi_1 for y in left-plane}, we get a sufficient condition of convergence for the integral \eqref{F_K Laplace integral}:
\[
	\Re(\alpha_2)>0,~|\arg(-y)|<\frac{\pi}{2}
		~\text{and}~
		(x,z)\in\mathbb{V}_{\Psi_1}.
\]
Under this condition, we also have $(x,y,z)\in\mathbb{V}_K$. So the integral \eqref{F_K Laplace integral} makes sense.

After the change of the variables $s=\widetilde{y}t$, with $\widetilde{y}=1/|y|$, and defining $\gamma:=\widetilde{z}/\widetilde{y}:=|y|/|z|$, the integral \eqref{F_K Laplace integral} reads
\begin{align}\label{F_K another integral}
		&F_K[\alpha_1,\alpha_2,\alpha_2,\beta_1,\beta_2,\beta_1;\gamma_1,\gamma_2,\gamma_3;x,y,z]\notag\\
		&\hspace{1cm} =\frac{\widetilde{y}}{\Gamma(\alpha_2)}\int_0^{\infty}\mathrm{e}^{-\widetilde{y}t}\left(\widetilde{y}t\right)^{\alpha_2-1}
		{}_1F_1\left[\begin{matrix}
			\beta_2\\
			\gamma_2
		\end{matrix};t\mathrm{e}^{\mi\alpha}\right]
		\Psi_1\big[\beta_1,\alpha_1;\gamma_1,\gamma_3;x,t\mathrm{e}^{\mi\beta}/\gamma\big]\md t,
\end{align}
where $\alpha$ and $\beta$ are the principal arguments of the respective variables $y$ and $z$: $y=|y|\mathrm{e}^{\mi \alpha}$ and $z=|z|\mathrm{e}^{\mi\beta}$.

We are interested in the approximation of $F_K$ for large $y$ and $z$ uniformly in $\gamma$ with 
\[
0<\varrho_1<\gamma<\varrho_2<\infty.
\] 
This means that $\widetilde{y}$ is small, $\gamma$ is bounded, and the remaining parameters $\alpha_1,\alpha_2,\beta_1,\beta_2,\gamma_1,\gamma_2,\gamma_3,x$ are fixed. In order to apply L\'{o}pez's theorem \cite{Lopez 2008}, we also require that 
\[
\alpha_2>0~\text{and}~\beta_1+\beta_2>0.
\]

Throughout this section, we define
\begin{equation}\label{Functions f and h}
	f(t):={}_1F_1\left[\begin{matrix}
		\beta_2\\
		\gamma_2
	\end{matrix};t\mathrm{e}^{\mi\alpha}\right]\Psi_1\big[\beta_1,\alpha_1;\gamma_1,\gamma_3;x,t\mathrm{e}^{\mi\beta}/\gamma\big],\ h(t):=\mathrm{e}^{-t}t^{\alpha_2-1}.
\end{equation}
Then the integral \eqref{F_K another integral} is of the form of the Mellin integrals considered in \cite{Lopez 2008}:
\begin{equation}\label{F_K Mellin integral}
	F_K[\alpha_1,\alpha_2,\alpha_2,\beta_1,\beta_2,\beta_1;\gamma_1,\gamma_2,\gamma_3;x,y,z]=\frac{\widetilde{y}}{\Gamma(\alpha_2)}\int_0^{\infty}h(\widetilde{y}t)f(t)\md t.
\end{equation}

The functions $f(t)$ and $h(t)$ are locally integrable on $[0,\infty)$, a property required by the technique introduced in \cite{Lopez 2008} to obtain an asymptotic expansion of the integral \eqref{F_K Mellin integral} for small $\widetilde{y}$. That technique also requires the asymptotic behaviours of these functions given below and is valid for $\Re(y)<0$ and $\Re(z)<0\ (\pi/2<|\alpha|,|\beta|<\pi)$:
\begin{itemize}
	\item[(i)] A power asymptotic expansion of $h(t)$ at $t=0$:
	\[
		h(t)=\sum_{k=0}^{m-1}A_k t^{k+\alpha_2-1}+h_m(t),\ \ A_k:=\frac{\left(-1\right)^k}{k!},
	\]
	with $h_m(t)=\mo\big(t^{m+\alpha_2-1}\big)$ when $t\to 0^+$;
	\item[(ii)] An inverse power asymptotic expansion of $f(t)$ at $t=\infty$:
	\[
		f(t)=\sum_{k=0}^{n-1}B_k t^{-k-\beta_1-\beta_2}+f_n(t),
	\]
	with $B_k=\left(-y\widetilde{y}\right)^{-k-\beta_1-\beta_2}\widetilde{B}_k$,
	\begin{align*}
		\widetilde{B}_k:={}&\frac{\Gamma(\gamma_2)\Gamma(\gamma_3)}{\Gamma(\gamma_2-\beta_2)\Gamma(\gamma_3-\beta_1)}\left(\frac{y}{z}\right)^{\beta_1}\\
		& \times\sum_{j=0}^k\frac{(\beta_2)_j(1+\beta_2-\gamma_2)_j(\beta_1)_{k-j}(1+\beta_1-\gamma_3)_{k-j}}{j!(k-j)!}\,_2F_1\left[\begin{matrix}
			j-k,\alpha_1 \\
			\gamma_1
		\end{matrix};x\right]\left(\frac{y}{z}\right)^{k-j},
	\end{align*}
	and $f_n(t)=\mo\big(t^{-\beta_1-\beta_2-n}\big)$ when $t\to \infty$. The above expansion follows from \eqref{1F1 left-plane} and \eqref{Psi_1 for y in left-plane};
	\item[(iii)] $h(t)=\mo\big(t^{-N}\big)$ when $t\to\infty$ for any positive integer $N$ and $f(t)=1+\mo(t)$ when $t\to 0^+$.
\end{itemize}

Following \cite{Lopez 2008}, we denote by $\mathfrak{M}[g;s]$ the Mellin transform of $g$. Clearly, $\mathfrak{M}[h;s]=\Gamma(s+\alpha_2-1)$. Using \eqref{M[Psi_F;s] evaluation}, we can readily get
\begin{align*}
		\mathfrak{M}[f;s]
		=(-y\widetilde{y})^{-s}&\Bigg\{ C_1^*(s)\left(\frac{y}{z}\right)^{s-\beta_2}F_{0:1;2}^{1:1;2}\left[\begin{matrix}
			\beta_1+\beta_2-s:&\!\!\!\alpha_1;&\!\!\!\beta_2,\,\beta_2-\gamma_2+1\\[-0.6ex]
			\rule[0.8mm]{4.9em}{0.5pt}:      &\!\!\!\gamma_1;&\!\!\!\beta_2+1-s,\,\beta_2+\gamma_3-s
		\end{matrix};\,x,\,-\frac{z}{y}\right]\notag\\
		&+C_2^*(s)F_{0:1;2}^{1:1;2}\left[\begin{matrix}
			\beta_1:&\!\!\!\alpha_1;&\!\!\!s,\,s-\gamma_2+1\\[-0.5ex]
			\rule[0.8mm]{1.1em}{0.5pt}:&\!\!\!\gamma_1;&\!\!\!\gamma_3,\,s-\beta_2+1
		\end{matrix};\,x,\,-\frac{z}{y}\right]\Bigg\},
\end{align*}
where
\begin{equation}\label{C_1^*(s) and C_2^*(s) def}
	C_1^*(s):=\frac{\Gamma(\gamma_2)\Gamma(\gamma_3)\Gamma(s-\beta_2)\Gamma(\beta_1+\beta_2-s)}{\Gamma(\beta_1)\Gamma(\gamma_2-\beta_2)\Gamma(\beta_2+\gamma_3-s)},\ \ C_2^*(s):=\frac{\Gamma(\gamma_2)\Gamma(s)\Gamma(\beta_2-s)}{\Gamma(\beta_2)\Gamma(\gamma_2-s)},
\end{equation}
$\Re(y)<0$, $(x,z)\in \mathbb{V}_{\Psi_1}$ and $\rho_x<|y|/|z|$.

Let us summarize the conditions needed in our further derivation as follows 
\begin{equation}\label{F_K final condition}
	\alpha_2>0,~\beta_1+\beta_2>0,~|\arg(-y)|<\frac{\pi}{2},~ (x,z)\in\mathbb{V}_{\Psi_1}~\text{and}~ \rho_x<\frac{|y|}{|z|}<\varrho_2<\infty.
\end{equation}

\subsection{The nonlogarithmic case}
When $\beta_1+\beta_2-\alpha_2\notin \ZZ$, from \cite[Theorem 1]{Lopez 2008} we have the following asymptotic expansion of $F_K$ for small $\widetilde{y}$. For any $n\in\ZZ_{>0}$,
\begin{equation}\label{nonlogarithmic formula}
	\begin{split}
		\Gamma(\alpha_2)F_K&[\alpha_1,\alpha_2,\alpha_2,\beta_1,\beta_2,\beta_1;\gamma_1,\gamma_2,\gamma_3;x,y,z]\\
		& =\sum_{k=0}^{n-1}B_k \mathfrak{M}[h;1-k-\beta_1-\beta_2]\widetilde{y}^{\,k+\beta_1+\beta_2}+\sum_{k=0}^{m-1}A_k \mathfrak{M}[f;k+\alpha_2]\widetilde{y}^{\,k+\alpha_2}+R_n\left(\widetilde{y}\right),
	\end{split}
\end{equation}
where $m=n+\lfloor 1-\alpha_2+\beta_1+\beta_2\rfloor$ and the reminder $R_n\left(\widetilde{y}\right)=\mo\big(\widetilde{y}^{\,n+\beta_1+\beta_2}\big)$ when $\widetilde{y}\to 0$.

Computing the Mellin transforms involved in \eqref{nonlogarithmic formula}, we obtain that, under the condition \eqref{F_K final condition},
\begin{equation}\label{F_K nonlogarithmic case asymptotics}
	F_K[\alpha_1,\alpha_2,\alpha_2,\beta_1,\beta_2,\beta_1;\gamma_1,\gamma_2,\gamma_3;x,y,z]=\sum_{k=0}^{n-1}\frac{\widehat{B}_k(y/z)}{y^{k+\beta_1+\beta_2}}+\sum_{k=0}^{m-1}\frac{\widehat{A}_k(y/z)}{y^{k+\alpha_2}}+R_n(y),
\end{equation}
with
\begin{align}\label{widehat{A}_k def}
		\widehat{A}_k(y/z)&:=\frac{\left(-1\right)^{-k-\alpha_2}}{\Gamma(\alpha_2)}\Gamma(\gamma_2)A_k\,\Bigg\{ \frac{\Gamma(\gamma_3)\Gamma(k+\alpha_2-\beta_2)\Gamma(\beta_1+\beta_2-\alpha_2-k)}{\Gamma(\beta_1)\Gamma(\gamma_2-\beta_2)\Gamma(\beta_2+\gamma_3-\alpha_2-k)}\left(\frac{y}{z}\right)^{k+\alpha_2-\beta_2}\notag\\
		&\hspace{0.5cm}\times F_{0:1;2}^{1:1;2}\left[\begin{matrix}
			\beta_1+\beta_2-\alpha_2-k:&\!\!\!\alpha_1;&\!\!\!\beta_2,\,\beta_2-\gamma_2+1\\[-0.4ex]
			\rule[0.8mm]{7.2em}{0.5pt}:      &\!\!\!\gamma_1;&\!\!\!\beta_2+1-\alpha_2-k,\,\beta_2+\gamma_3-\alpha_2-k
		\end{matrix};\,x,\,-\frac{z}{y}\right]\notag\\
		&\hspace{0.5cm} +\frac{\Gamma(k+\alpha_2)\Gamma(\beta_2-\alpha_2-k)}{\Gamma(\beta_2)\Gamma(\gamma_2-\alpha_2-k)}F_{0:1;2}^{1:1;2}\left[\begin{matrix}
			\beta_1:&\!\!\!\alpha_1;&\!\!\!k+\alpha_2,\,k+\alpha_2-\gamma_2+1\\[-0.4ex]
			\rule[0.8mm]{1.1em}{0.5pt}:&\!\!\!\gamma_1;&\!\!\!\gamma_3,\,k+\alpha_2-\beta_2+1
		\end{matrix};\,x,\,-\frac{z}{y}\right]\Bigg\}
\end{align}
and
\begin{equation}\label{widehat{B}_k def}
	\widehat{B}_k(y/z):=\frac{\Gamma(\alpha_2-\beta_1-\beta_2-k)}{\Gamma(\alpha_2)}\left(-1\right)^{-k-\beta_1-\beta_2}\widetilde{B}_k.
\end{equation}
The reminder $R_n(y)$ verifies $R_n(y)=\mo\big(y^{-n-\beta_1-\beta_2}\big)$ when $|y|\to\infty$ uniformly in $|z|$ with $y/z$ bounded.

\subsection{The logarithmic case I}
When $1+\beta_1+\beta_2-\alpha_2\in\ZZ_{>0}$, from \cite[Theorem 1]{Lopez 2008} we have the following asymptotic expansion for small $\widetilde{y}$. For any $n\in\ZZ_{>0}$,
\begin{align}\label{logarithmic case I formula}
		&\Gamma(\alpha_2)F_K[\alpha_1,\alpha_2,\alpha_2,\beta_1,\beta_2,\beta_1;\gamma_1,\gamma_2,\gamma_3;x,y,z]=\sum_{k=0}^{\beta_1+\beta_2-\alpha_2-1}A_k \mathfrak{M}[f;k+\alpha_2]\widetilde{y}^{\,k+\alpha_2}\notag\\
		&\hspace{1cm}
		+\sum_{k=0}^{n-1}\widetilde{y}^{\,k+\beta_1+\beta_2}\Bigg\{\!-B_k A_{k-\alpha_2+\beta_1+\beta_2}\log\widetilde{y}+\lim_{s\to 0}\Big(B_k \mathfrak{M}[h;s+1-k-\beta_1-\beta_2]\notag\\
		&\hspace{1cm} +A_{k-\alpha_2+\beta_1+\beta_2}\mathfrak{M}[f;s+k+\beta_1+\beta_2]\Big)\Bigg\}+R_n(\widetilde{y}),
\end{align}
where $m=n-\alpha_2+\beta_1+\beta_2$ and the reminder $R_n(\widetilde{y})=\mo\big(\widetilde{y}^{\,n+\beta_1+\beta_2}\log\widetilde{y}\big)$ when $\widetilde{y}\to 0$.

To compute the limit involved in \eqref{logarithmic case I formula}, we follow  \cite[Eq. (9) and (10)]{Lopez 2008}. Write
\[\mathfrak{M}[f;s+k+\beta_1+\beta_2]=C_f-\frac{B_k}{s},\ \mathfrak{M}[h;s+1-k-\beta_1-\beta_2]=C_h+\frac{A_{k-\alpha_2+\beta_1+\beta_2}}{s},\ \ \text{as}\ \,s\to 0.\]
Thus, that limit exsits. Further, observing that only the factor $\Gamma(\beta_1+\beta_2-s)$ of $C_1^*(s)$ (see \eqref{C_1^*(s) and C_2^*(s) def}) and $\mathfrak{M}[h;s]$ have poles at $1-k-\beta_1-\beta_2$, that limit can be reduced to the form as described in \eqref{Gamma limit}. Therefore, using \eqref{Gamma limit} and after some calculations, we obtain that, under the condition \eqref{F_K final condition},
\begin{align}\label{F_K logarithmic case I asymptotics}
		& F_K[\alpha_1,\alpha_2,\alpha_2,\beta_1,\beta_2,\beta_1;\gamma_1,\gamma_2,\gamma_3;x,y,z]\notag\\
		&\hspace{1cm} =\sum_{k=0}^{\beta_1+\beta_2-\alpha_2-1}\frac{\widehat{A}_k(y/z)}{y^{k+\alpha_2}}+\sum_{k=0}^{n-1}\frac{\widehat{C}_k(y/z)+B_k^{\times}(y/z)\log|y|}{y^{k+\beta_1+\beta_2}}+R_n(y),
\end{align}
with $\widehat{A}_k(y/z)$ given in \eqref{widehat{A}_k def},
\begin{equation}\label{B_k times def}
	B_k^{\times}(y/z):=\frac{\left(-1\right)^{-k-\beta_1-\beta_2}}{\Gamma(\alpha_2)}\widetilde{B}_k A_{k-\alpha_2+\beta_1+\beta_2}
\end{equation}
and
\begin{align}\label{widehat{C}_k def}
		&\widehat{C}_k(y/z):=\frac{\left(-1\right)^{-k-\beta_1-\beta_2}}{\Gamma(\alpha_2)}\Bigg\{\frac{\widetilde{B}_k\left(-1\right)^{k-\alpha_2+\beta_1+\beta_2}\psi(k-\alpha_2+\beta_1+\beta_2+1)}{(k-\alpha_2+\beta_1+\beta_2)!}\notag\\
		&\hspace{0.5cm} +A_{k-\alpha_2+\beta_1+\beta_2}\Gamma(\gamma_2)\Bigg[\frac{\left(y/z\right)^{k+\beta_1}\left(-1\right)^k\psi(k+1)\Gamma(\gamma_3)\Gamma(k+\beta_1)}{\Gamma(\beta_1)\Gamma(\gamma_2-\beta_2)\Gamma(\gamma_3-\beta_1-k)k!}\notag\\
		&\hspace{0.5cm}\times  F_{0:1;2}^{1:1;2}\left[\begin{matrix}
			-k:&\!\!\!\alpha_1;&\!\!\!\beta_2,\,\beta_2-\gamma_2+1\\[-0.5ex]
			\rule[0.8mm]{1.4em}{0.5pt}:      &\!\!\!\gamma_1;&\!\!\!1-\beta_1-k,\,\gamma_3-\beta_1-k
		\end{matrix};\,x,\,-\frac{z}{y}\right]\notag\\
		&\hspace{0.5cm} +\frac{\Gamma(-\beta_1-k)\Gamma(k+\beta_1+\beta_2)}{\Gamma(\beta_2)\Gamma(\gamma_2-\beta_1-\beta_2-k)}F_{0:1;2}^{1:1;2}\left[\begin{matrix}
			\beta_1:&\!\!\!\alpha_1;&\!\!\!k+\beta_1+\beta_2,\,k+\beta_1+\beta_2-\gamma_2+1\\[-0.5ex]
			\rule[0.8mm]{1.1em}{0.5pt}:&\!\!\!\gamma_1;&\!\!\!\gamma_3,\,k+\beta_1+1
		\end{matrix};\,x,\,-\frac{z}{y}\right]\Bigg]\Bigg\},
\end{align}
where $\psi(s)$ is the digamma function.

\subsection{The logarithmic case II}
When $\alpha_2-\beta_1-\beta_2\in\ZZ_{>0}$, from \cite[Theorem 1]{Lopez 2008} we have the following asymptotic expansion for small $\widetilde{y}$. For any $n\in\ZZ_{>0}$,
\begin{align}\label{logarithmic case II formula}
		&\Gamma(\alpha_2)F_K[\alpha_1,\alpha_2,\alpha_2,\beta_1,\beta_2,\beta_1;\gamma_1,\gamma_2,\gamma_3;x,y,z]=\sum_{k=0}^{\alpha_2-\beta_1-\beta_2-1}B_k \mathfrak{M}[h;1-k-\beta_1-\beta_2]\widetilde{y}^{\,k+\beta_1+\beta_2}\notag\\
		&\hspace{3cm} +\sum_{k=0}^{m-1}\widetilde{y}^{\,k+\alpha_2}\Bigg\{\!-A_k B_{k+\alpha_2-\beta_1-\beta_2}\log\widetilde{y}+\lim_{s\to 0}\Big(A_k \mathfrak{M}[f;s+k+\alpha_2]\notag\\
		&\hspace{3cm} +B_{k+\alpha_2-\beta_1-\beta_2}\mathfrak{M}[h;s+1-\alpha_2-k]\Big)\Bigg\}+R_n(\widetilde{y}),
\end{align}
where $n=m+\alpha_2-\beta_1-\beta_2$ and the reminder $R_n(\widetilde{y})=\mo\big(\widetilde{y}^{\,n+\beta_1+\beta_2}\log\widetilde{y}\big)$ when $\widetilde{y}\to 0$. Computing the limit involved in \eqref{logarithmic case II formula} and after some calculations, we obtain that, under the condition \eqref{F_K final condition},
\begin{align}\label{F_K logarithmic case II asymptotics}
		& F_K[\alpha_1,\alpha_2,\alpha_2,\beta_1,\beta_2,\beta_1;\gamma_1,\gamma_2,\gamma_3;x,y,z]=\sum_{k=0}^{\alpha_2-\beta_1-\beta_2-1}\frac{\widehat{B}_k(y/z)}{y^{k+\beta_1+\beta_2}}\notag\\
		&\hspace{1cm} +\sum_{k=0}^{m-1}\frac{\widehat{C}_{k+\alpha_2-\beta_1-\beta_2}(y/z)+B_{k+\alpha_2-\beta_1-\beta_2}^{\times}(y/z)\log|y|}{y^{k+\alpha_2}}+R_n(y),
\end{align}
with $\widehat{B}_k(y/z),\,B_k^{\times}(y/z)$ and $\widehat{C}_k(y/z)$ given in \eqref{widehat{B}_k def}, \eqref{B_k times def} and \eqref{widehat{C}_k def}, respectively. In \eqref{F_K logarithmic case I asymptotics} and \eqref{F_K logarithmic case II asymptotics}, the reminder term $R_n(y)=\mo\big(y^{-m-\alpha_2}\log y\big)$ when $|y|\to\infty$ uniformly in $|z|$ with $y/z$ bounded.

\begin{remark}
	When $x<1$, in view of \eqref{F_{0:1;2}^{1:1;2} integral-condition}, the condition \eqref{F_K final condition} attached to the functions $f$ and $h$ defined in \eqref{Functions f and h} can be replaced by
	\begin{align*}
	&\alpha_2>0, ~\beta_1+\beta_2>0, ~
	|\arg(-y)|<\frac{\pi}{2}, ~|\arg(-z)|<\frac{\pi}{2}, \\
	&\Re(\gamma_1)>\Re(\alpha_1)>0,~x<1,~\left|\arg\left(1-\frac{y}{1-x}\right)\right|<\pi,~
	0<\varrho_1<\frac{|y|}{|z|}<\varrho_2<\infty. 
	\end{align*} 
	Under this condition, the asymptotic expansions \eqref{F_K nonlogarithmic case asymptotics}, \eqref{F_K logarithmic case I asymptotics} and \eqref{F_K logarithmic case II asymptotics} and the corresponding error estimates remain true. In addition, by letting $x=0$ in \eqref{F_K Mellin integral}, we obtain
		\[
			F_2[\alpha_2,\beta_2,\beta_1;\gamma_2,\gamma_3;y,z]=\frac{\widetilde{y}}{\Gamma(\alpha_2)}\int_0^{\infty}h(\widetilde{y}t)f(t)\md t,
		\]
		where
		\[f(t)={}_1F_1\left[\begin{matrix}
			\beta_2\\
			\gamma_2
		\end{matrix};t\mathrm{e}^{\mi\alpha}\right]
		{}_1F_1\left[\begin{matrix}
			\beta_1\\
			\gamma_3
		\end{matrix};t\mathrm{e}^{\mi\beta}/\gamma\right],\ h(t)=\mathrm{e}^{-t}t^{\alpha_2-1}
		\]
		and $F_2$ is the second Appell hypergeometric function. The integral shares the same form with the Mellin integral considered by Garcia and L\'{o}pez in \emph{\cite[Eq. (1.4)]{Garcia-Lopez 2010}}. Therefore, our results contain Garcia and L\'{o}pez's results as special cases. 
\end{remark}

\appendix
\section{Calculation of a Mellin transform}\label{Appendix A}
Here we calculate a Mellin transform needed in Section \ref{Section 4}. As in Section \ref{Section 4}, the symbol $\mathfrak{M}[g;s]$ denotes the Mellin transform of $g$ when it exists, or its analytic continuation as a function of $s$.

When $\Re(\lambda)<0$ and $(x,\mu)\in\mathbb{V}_{\Psi_1}$, we define
\[
\Psi_F(t):={}_1F_1
\left[\begin{matrix}
	\xi\\
	\eta
\end{matrix};\lambda t\right]
\Psi_1[a,b;c,c';x,\mu t].
\]
In view of \eqref{1F1 left-plane} and \eqref{Psi_1 for y in left-plane}, $\Psi_F(t)=1+o(1)$ as $t\to 0^+$ and $\Psi_F$ has an inverse power asymptotic expansion as $t\to +\infty$. Moreover, $\Psi_F\in L_{\text{loc}}^1(0,\infty)$. Thanks to \cite[p. 278, Remark 1]{Lopez 2008}, the Mellin transform $\mathfrak{M}[\Psi_F;s]$ exists and defines a meromorphic function of $s$ in the half plane $\Re(s)>0$.

Using \eqref{Psi_1 integral representation} and swapping the order of integrations, we obtain
\begin{align}\label{Psi_F Mellin transform}
		\mathfrak{M}[\Psi_F;s]
		&=\frac{\Gamma(c)}{\Gamma(b)\Gamma(c-b)}\int_0^1  u^{b-1}\left(1-u\right)^{c-b-1}\left(1-ux\right)^{-a}\notag\\
		&\hspace{1cm}\times\mathfrak{M}\left[{}_1F_1\left[\begin{matrix}
			\xi\\
			\eta
		\end{matrix};\lambda t\right]{}_1F_1\left[\begin{matrix}
		a\\
		c'
		\end{matrix};\frac{\mu t}{1-ux}\right];s\right]\md u.
\end{align}
The Mellin transform in right-hand side of \eqref{Psi_F Mellin transform} is given by \cite[p. 436, Eq. (1)]{Handbook of Mellin transforms}
\begin{align}\label{general Psi_F Mellin transform}
		&\mathfrak{M}\left[
		{}_1F_1\left[\begin{matrix}
			a\\
			b
		\end{matrix};-\omega t\right]
		{}_1F_1\left[\begin{matrix}
			c\\
			d
		\end{matrix};-\sigma t\right];s\right]
		=\omega^{-s}
		\frac{\Gamma(b)\Gamma(s)\Gamma(a-s)}{\Gamma(a)\Gamma(b-s)}
		{}_3F_2\left[\begin{array}{c}
			c,s,s-b+1\\
			d,s-a+1
		\end{array};-\frac{\sigma}{\omega}\right]\notag\\
		&\hspace{2cm} +\sigma^{a-s}\omega^{-a}
		\frac{\Gamma(b)\Gamma(d)\Gamma(s-a)\Gamma(a+c-s)}{\Gamma(d)\Gamma(b-a)\Gamma(a+d-s)}
		{}_3F_2\left[\begin{array}{c}
			a,a-b+1,a+c-s\\
			a-s+1,a+d-s
		\end{array};-\frac{\sigma}{\omega}\right],
\end{align}
where $\Re(\sigma)>0$, $\Re(\omega)>0$ and $0<\Re(s)<\Re(a+c)$. In order to apply \eqref{general Psi_F Mellin transform} to \eqref{Psi_F Mellin transform}, we need the restriction $\Re(\mu/(1-ux))<0$. Here for convenience, we require that $x<1$ so that
\[
\Re\left(\frac{\mu}{1-ux}\right)=\frac{1}{1-ux}\Re(\mu)
=-\frac{1}{1-ux}|\mu|\cos\left(\arg(-\mu)\right)<0.
\]

Taking $\omega=-\lambda$ and $\sigma=\mu/(ux-1)$ in \eqref{general Psi_F Mellin transform} and inserting \eqref{general Psi_F Mellin transform} into \eqref{Psi_F Mellin transform}, we obtain
\[
	\mathfrak{M}[\Psi_F;s]=C_1(s)(-\mu)^{-s}\left(\frac{\mu}{\lambda}\right)^{\xi}I_1(s)+C_2(s)(-\lambda)^{-s}I_2(s),
\]
where
\begin{align*}
	C_1(s)&:=
	\frac{\Gamma(\eta)\Gamma(c')\Gamma(s-\xi)\Gamma(a+\xi-s)}{\Gamma(a)\Gamma(\eta-\xi)\Gamma(\xi+c'-s)},~~~
	C_2(s):=
	\frac{\Gamma(\eta)\Gamma(s)\Gamma(\xi-s)}{\Gamma(\xi)\Gamma(\eta-s)},\\
	I_1(s)
	&:=\frac{\Gamma(c)}{\Gamma(b)\Gamma(c-b)}\int_0^1 u^{b-1}\left(1-u\right)^{c-b-1}\left(1-ux\right)^{s-a-\xi}\notag\\
	&\hspace{2cm}\times {}_3F_2\left[\begin{matrix}
			\xi,\xi-\eta+1,a+\xi-s\\
			\xi+1-s,\xi+c'-s
		\end{matrix};\frac{\mu}{\lambda}\frac{1}{ux-1}\right]\md u
\end{align*}
and
\begin{align*}
		I_2(s)
		:=\frac{\Gamma(c)}{\Gamma(b)\Gamma(c-b)}&\int_0^1 u^{b-1}\left(1-u\right)^{c-b-1}\left(1-ux\right)^{-a}\notag\\
		&\hspace{0.5cm}\times{}_3F_2\left[\begin{matrix}
			a,s,s-\eta+1\\
			c',s-\xi+1
		\end{matrix};\frac{\mu}{\lambda}\frac{1}{ux-1}\right]\md u.
\end{align*}
With the help of \eqref{F_{0:1;2}^{1:1;2} integral}, $I_1(s)$ and $I_2(s)$ can be evaluated in terms of the Kamp\'e de F\'eriet function $F_{0:1;2}^{1:1;2}$. 
\begin{theorem}
	\begin{align}\label{M[Psi_F;s] evaluation}
		\mathfrak{M}[\Psi_F;s]&
		= C_1(s)(-\mu)^{-s}\left(\frac{\mu}{\lambda}\right)^{\xi}F_{0:1;2}^{1:1;2}\left[\begin{matrix}
			a+\xi-s:&\!\!\!b;&\!\!\!\xi,\,\xi-\eta+1\\[-0.4ex]
			\rule[0.8mm]{3.7em}{0.5pt}:      &\!\!\!c;&\!\!\!\xi+1-s,\,\xi+c'-s
		\end{matrix};\,x,\,-\frac{\mu}{\lambda}\right]\notag\\
		&\hspace{1cm}+C_2(s)\left(-\lambda\right)^{-s}F_{0:1;2}^{1:1;2}\left[\begin{matrix}
			a:&\!\!\!b;&\!\!\!s,s-\eta+1\\[-0.4ex]
			\rule[0.8mm]{0.6em}{0.5pt}:&\!\!\!c;&\!\!\!c',s-\xi+1
		\end{matrix};\,x,\,-\frac{\mu}{\lambda}\right].
	\end{align}
\end{theorem}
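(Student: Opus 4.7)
The plan is to follow the roadmap already laid out in the text, turning each displayed equation into a verified computation. First I would use the Euler-type integral representation \eqref{Psi_1 integral representation} of $\Psi_1$ to write
\[
\Psi_F(t)=\frac{\Gamma(c)}{\Gamma(b)\Gamma(c-b)}\int_0^1 u^{b-1}(1-u)^{c-b-1}(1-ux)^{-a}\,{}_1F_1\!\left[\begin{matrix}\xi\\ \eta\end{matrix};\lambda t\right]{}_1F_1\!\left[\begin{matrix}a\\ c'\end{matrix};\frac{\mu t}{1-ux}\right]\md u,
\]
then take Mellin transforms in $t$. Interchanging the $u$-integral and the Mellin integral is permissible because, under $\Re(\lambda)<0$, $x<1$, and $(x,\mu)\in\mathbb{V}_{\Psi_1}$, both inner Kummer functions decay (by \eqref{1F1 left-plane}) and the outer factor is integrable in $u$ on $(0,1)$, so Fubini applies in the strip $0<\Re(s)<\Re(a+\xi)$. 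This yields \eqref{Psi_F Mellin transform}.

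Next I would apply the classical Mellin product formula \eqref{general Psi_F Mellin transform} with $\omega=-\lambda$ and $\sigma=\mu/(ux-1)$. The hypothesis $\Re(\omega)>0$ follows from $\Re(\lambda)<0$, and $\Re(\sigma)>0$ follows from the chain $\Re(\sigma)=-\Re(\mu)/(1-ux)$ together with $x<1$ and $(x,\mu)\in\mathbb{V}_{\Psi_1}$ (which forces $\Re(\mu)<0$ in the regime considered). Substituting and collecting the factors that do not depend on $u$ produces the announced decomposition
\[
\mathfrak{M}[\Psi_F;s]=C_1(s)(-\mu)^{-s}\!\left(\frac{\mu}{\lambda}\right)^{\xi}I_1(s)+C_2(s)(-\lambda)^{-s}I_2(s),
\]
with $C_1(s),C_2(s)$ exactly as defined in the text and $I_1(s),I_2(s)$ the two explicit $u$-integrals of the form $\int_0^1 u^{b-1}(1-u)^{c-b-1}(1-ux)^{-A}\,{}_3F_2[\cdots;-\tfrac{\mu/\lambda}{1-ux}]\md u$, with shape parameters $A=a+\xi-s$ for $I_1$ and $A=a$ for $I_2$.

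The final step is pattern-matching each of $I_1(s)$ and $I_2(s)$ against the Kampé de Fériet integral representation \eqref{F_{0:1;2}^{1:1;2} integral}. Reading the right-hand side of \eqref{F_{0:1;2}^{1:1;2} integral} with $(b_1,c_1,a_1)=(b,c,A)$, $y=-\mu/\lambda$, and the triple $(d_1,d_2,a_1;e_1,e_2)$ taken respectively as $(\xi,\xi-\eta+1,a+\xi-s;\xi+1-s,\xi+c'-s)$ for $I_1$ and $(s,s-\eta+1,a;c',s-\xi+1)$ for $I_2$, the two integrals evaluate directly to the two $F^{1:1;2}_{0:1;2}$ functions appearing in \eqref{M[Psi_F;s] evaluation}.

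The main technical obstacle is checking the domain of validity of \eqref{F_{0:1;2}^{1:1;2} integral}, namely \eqref{F_{0:1;2}^{1:1;2} integral-condition}, for the specific parameter choices. The conditions $\Re(c)>\Re(b)>0$ and $x<1$ are already in force; what must be verified is $|\arg(1-y)|<\pi$ and $|\arg(1-y/(1-x))|<\pi$ with $y=-\mu/\lambda$. Since $\Re(\lambda)<0$ and $\Re(\mu)<0$, the quotient $-\mu/\lambda$ lies in the right half-plane modulo the precise principal-branch conventions used for $(-\mu)^{-s}$, $(-\lambda)^{-s}$, and $(\mu/\lambda)^{\xi}$; I would devote the careful part of the proof to tracking these branches consistently so that $1-y$ and $1-y/(1-x)$ avoid the negative real axis, and then record the identity \eqref{M[Psi_F;s] evaluation} initially in the strip $0<\Re(s)<\Re(a+\xi)$, extending it by meromorphic continuation since both sides are meromorphic in $s$ on $\Re(s)>0$ by \cite[p.~278, Remark 1]{Lopez 2008}.
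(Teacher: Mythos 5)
Your proposal follows the paper's own argument essentially verbatim: Euler integral for $\Psi_1$, interchange with the Mellin transform to get \eqref{Psi_F Mellin transform}, application of the tabulated product formula \eqref{general Psi_F Mellin transform} with $\omega=-\lambda$, $\sigma=\mu/(ux-1)$, and identification of $I_1(s)$, $I_2(s)$ via the Kamp\'e de F\'eriet integral \eqref{F_{0:1;2}^{1:1;2} integral}. The parameter identifications and the validity strip $0<\Re(s)<\Re(a+\xi)$ are correct, so this is the same proof with slightly more attention paid to the branch and convergence conditions.
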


By continuation of $F_{0:1;2}^{1:1;2}$, \eqref{M[Psi_F;s] evaluation} is valid for
\begin{equation}\label{M[Psi_F;s] condition}
	\Re(\lambda)<0,\ (x,\mu)\in\mathbb{V}_{\Psi_1}~ \text{and}~ \left(x,-\frac{\mu}{\lambda}\right)\in\DD_F.
\end{equation}

\begin{remark} 
	There are several different proofs of \eqref{general Psi_F Mellin transform}, which makes it of particular interest. A classical approach is to replace the second ${}_1F_1$ function with its Mellin-Barnes integral representation and then interchange the order of integration to obtain a single contour integral. The use of Cauchy's residue theorem will do the rest. A more interesting proof requires the method of brackets (see, for example, \cite{Amdeberhan-Espinosa-Gonzalez} and \cite{Gonzalez-Kondrashuk-Moll-Recabarren-2022}). 
\end{remark}

\section*{Acknowledgement}

The research of the second author is supported by National Natural Science Foundation of China (Grant No. 12001095).

\end{document}